\NewCommandCopy{\notocsection}{\section}
\xpatchcmd{\notocsection}{{1}}{{1001}}{}{}
\NewCommandCopy{\notocsubsection}{\subsection}
\xpatchcmd{\notocsubsection}{{2}}{{1001}}{}{}
\title{Folding and Metric Entropies for Extended Shifts}
\author[N. Martins]{Neemias Martins}
\address[NM]{Department of Mathematics, Unicamp, IMECC\\ Campinas-SP, 13083-970, Brazil}
\email{neemias@ime.unicamp.br}
\author[P. Mattos]{Pedro G. Mattos}
\address[PM]{Department of Mathematics, Unicamp, IMECC\\ Campinas-SP, 13083-970, Brazil}
\email{pedrogmattos@ime.unicamp.br}
\author[R. Varão]{Régis Varão}
\address[RV]{Department of Mathematics, Unicamp, IMECC\\ Campinas-SP, 13083-970, Brazil}
\email{varao@unicamp.br}
\keywords{Metric entropy, Folding entropy, Extended shifts, Disintegration}
\subjclass{Primary: 37A35, Secondary: 37B10}
\begin{document}

\begin{abstract}
In this paper we calculate the metric and folding entropies for a family of non-invertible symbolic dynamical systems $(\Shift_{m_-,m_+}, \shift_\phi)$ which generalizes the standard bilateral Bernoulli shifts. The space $\Shift_{m_-,m_+}$ consists of symbolic sequences over two distinct finite alphabets, with dynamics governed by a shift map $\shift_\phi$ incorporating a non-invertible function $\phi$ that maps one of the alphabets to the other one. These systems are, for instance, particularly useful for encoding the many-to-one baker's transformation endomorphisms, and they can also be seen as a skew product with a unilateral Bernoulli shift on the base. 
\end{abstract}

\maketitle
\vspace{-\baselineskip}
\tableofcontents

\section{Introduction}

\subsection{Classical theory of Bernoulli shifts}

Bernoulli shifts play a central role in the study of dynamical systems, serving as a prototype for understanding chaotic and stochastic behavior.
Since \citeauthor{Hadamard1898}'s study of the geodesic flow on negatively curved manifolds \cite{Hadamard1898}, their significance is underscored by their connections to other fundamental results in the field.

For instance, Anosov flows and geodesic flows on manifolds with negative curvature exhibit strong mixing properties and are often modeled using Bernoulli shifts due to their similar chaotic characteristics \cite{Anosov1967,Zemer-Kosloff2024,ornstein1973geodesic,ratner1974anosov}. Similarly, hyperbolic toral automorphisms, such as Arnold's cat map, share ergodicity and mixing properties with Bernoulli shifts, making them discrete-time analogues in the study of hyperbolic dynamics \cite{Katznelson1971, Sinai1966}. More generally, \citeauthor{Bowen1970} used symbolic dynamics to study Axiom A diffeomorphisms \cite{Bowen1970}.

The profound impact of Bernoulli shifts is further highlighted by Ornstein's classification theorem \cite{Ornstein}, which established that metric entropy, as defined by Kolmogorov and Sinai \cite{Kolmogorov1958,Sinai1959}, is a complete invariant for isomorphisms of bilateral Bernoulli shifts, a result that has influenced the classification of a wide range of dynamical systems. These connections demonstrate the unifying role of Bernoulli shifts in linking geometric, topological, and measure-theoretic aspects of dynamics.

\subsection{The case of endomorphisms}

Although the classification of invertible dynamical systems, such as Bernoulli shifts, has seen significant progress --- most notably through Ornstein's isomorphism theorem --- the classification of non-invertible maps (endomorphisms) presents a far more complex and nuanced challenge. Unlike invertible systems, where entropy serves as a complete invariant for Bernoulli shifts, non-invertible systems lack a similarly comprehensive classification framework. Entropy alone is insufficient to classify one-sided Bernoulli shifts \cite[Section 4.5]{Downarowicz}.

An endomorphism has a Bernoulli extension if its natural extension in the limit space is isomorphic to a Bernoulli shift. However, two  non-isomorphic endomorphisms can have isomorphic natural extensions. Some characterizations and applications of non-invertible Bernoulli transformations are presented by \citeauthor{Ashley1997}~\cite{Ashley1997}, \citeauthor{Bruin2009}~\cite{Bruin2009}, and \citeauthor{Hoffman-Rudolph2002.1}~\cite{Hoffman-Rudolph2002.2, Hoffman-Rudolph2002.1}. 

Anosov endomorphisms are also a rich family of examples to once more illustrate how non-invertibility provides new behavior if compared to their invertible analogue. It is well known that Anosov diffeomorphisms are structurally stable, but this fails for Anosov endomorphism, even though from the inverse limit perspective they are structurally stable. In fact, not even a linear Anosov endomorphism is structurally stable~\cite{Feliks}.

The mathematical physicist David Ruelle introduced the folding entropy as a tool to measure the complexity of the preimage branches of an endomorphism~\cite{Ruelle}. Ruelle was interested in investigating entropy production in non-equilibrium statistical mechanics. Considering any state $\mu$ as a limit of a sequence of absolutely continuous measures with probability densities, Ruelle expressed the entropy production by a limiting process from which the folding entropy emerged as a conditional entropy measuring the complexities of preimages of the system in terms of the state $\mu$. Some recent applications of the folding entropy are given by \citeauthor{liao-wang}~\cite{liao-wang}, and \citeauthor{Wu-Zhu}~\cite{Wu-Zhu, wu2022}.

As we will properly present in the next subsection, our work deals with a certain case of endomorphism for which our two main results, \cref{thm:measure_entropy_of_extended_shift,thm:folding_entropy_formula}, are the calculation of the metric and folding entropy, respectively. Our system could also be seen as a skew product with a one-sided Bernoulli system on the base, but we do not want to take this approach and we deliberately see it as an extended shift, as it will become clear soon since we work with two alphabets.

\subsection{Shifts and extended shifts}

Before we present our systems, we establish some notation for standard unilateral and bilateral shifts.
The symbolic space in $m \in \Z_{> 0}$ symbols can be either $\Shift^+_m := \{0, \ldots, m-1\}^{\Z_{\geq 0}}$, the space of all \emph{unilateral sequences} $x = (x_0, x_1, \ldots)$, or $\Shift_m := \{0, \ldots, m-1\}^{\Z}$, the space of all \emph{bilateral sequences} $x = (\ldots, x_{-1}; x_0, x_1, \ldots)$. In both cases, the entries of the sequences are elements of the alphabet $\{0, \ldots, m-1\}$ (we will only consider finite alphabets). We also denote $\Shift^-_{m} := \{0, \ldots, m-1\}^{\Z_{<0}}$, the space of unilateral sequences $(\ldots, x_{-2}, x_{-1})$ on $m$ symbols indexed by the negative integers (this is not standard notation but we will use this ahead).
For unilateral sequences, the shift map $\fun{\shift}{\Shift^+_m}{\Shift^+_m}$ is defined by $\shift(x) := (x_1, x_2, \ldots)$, whereas for bilateral sequences we define $\fun{\shift}{\Shift_m}{\Shift_m}$ by $\shift(x) := (\ldots, x_{0}; x_1, x_2 \ldots)$. This map captures the behavior of the orbits of points in the system being encoded.

We will define an extension of bilateral Bernoulli shifts which can also be interpreted as a skew product of unilateral shifts with finite alphabets (see \cref{sec:extended_shifts} for precise definitions). Given two strictly positive integers $m_-,  m_+ \in \Z_{> 0}$, we define the product space $\Shift_{m_-,m_+} := \Shift^-_{m_-} \times \Shift^+_{m_+}$. The elements of $\Shift_{m_-,m_+}$ are pairs $x = (x^-, x^+)$, with $x^-=(\ldots, x_{-2}, x_{-1})$ and $x^+=(x_0, x_1,\ldots)$, which can be seen as bilateral sequences
    \begin{equation*}
    x = (\ldots, x_{-1}; x_0, x_1, \ldots).
    \end{equation*}

It is important to notice that the symbols indexed by strictly negative numbers (entries of $x^- \in \Shift^-_{m_-}$) are elements of the \emph{backward alphabet} $\{0, \ldots, m_- -1\}$, while the symbols indexed by positive numbers are elements of the  \emph{forward alphabet} $\{0, \ldots, m_+ -1\}$. This means that, when $m_- < m_+$, we cannot define the usual shift map on these sequences. To overcome this problem, we use a surjective function $\fun{\phi}{\{0, \ldots, 
m_+ -1\}}{\{0, \ldots, m_- -1\}}$ and define an alternative ``shift'' map $\fun{\shift_\phi}{\Shift_{m_-,m_+}}{\Shift_{m_-,m_+}}$ which shifts the sequence $x$ to the left, as usual, but translates the symbol $x_0$ from the alphabet $\{0, \ldots, m_+ -1\}$ to the symbol $\phi(x_0)$ in $\{0, \ldots, m_--1\}$; that is, $\shift_\phi(x)$ is given by
    \begin{equation*}
    \shift_\phi(\ldots, x_{-2}, x_{-1}; x_0, x_1, \ldots) := (\ldots, x_{-1}, \phi(x_0); x_1, x_2 \ldots)
    \end{equation*}
We call $(\Shift_{m_-,m_+}, \shift_\phi)$ the \emph{$(m_-, m_+)$-extended shift} (with \emph{transition function} $\phi$). When $m_+ = m_-$, this is isomorphic to $\Shift_{m_+} = \{0, \ldots, m_+ -1\}^{\Z}$ with the standard bilateral shift $\shift$, since $\phi$ will just be a permutation of the symbols.

This system has the structure of an iterated function system (IFS) given by a skew product of base $\Shift^+_{m_+}$ and fiber $\Shift^-_{m_-}$. (Notice that this inverts the usual order of the product in the notation for skew products, since we denote the fiber to the left and the base to the right; we do so because our perspective is primarily that of $\Shift_{m_-,m_+}$ as a space of \textit{bilateral} sequences.) If for each symbol $s \in \{0, \ldots, m_--1\}$ we define the map $\fun{\shift\inv_s}{\Shift^-_{m_-}}{\Shift^-_{m_-}}$, $\shift\inv_s(x^-) = (\ldots, x_{-1}, s)$, then the action of $\shift_\phi$ on $x = (x^-, x^+) \in \Shift_{m_-,m_+}$ is
    \begin{equation*}
    \shift_\phi(x^-, x^+) = (\shift\inv_{\phi(x_0)}(x^-), \shift(x^+)).
    \end{equation*}
The maps $\fun{\shift\inv_s}{\Shift^-_{m_-}}{\Shift^-_{m_-}}$ are easily shown to be contractions.

Apart from trivial cases, these systems are non-invertible. They can be used to encode non-invertible dynamics, which is not possible with the standard bilateral shift because it is invertible.
An example is the $m$-to-$1$ baker's transformation, which represents a non-invertible model of deterministic chaos and is a measure-preserving generalization of the classical ($1$-to-$1$) baker’s transformation.
They are also mixing, ergodic and have chaotic behavior: they are transitive maps with dense periodic points \cite{Mehdipour-Martins}.

\subsection{Main results}

Given a probability distribution $p^+ = (p^+_0, \ldots, p^+_{m_+ -1})$ on the forward alphabet $\{0, \ldots, m_+ -1\}$, we define a measure $\med$ on $\Shift_{m_-,m_+}$ with respect to which the extended shift $\shift_\phi$ is measure-preserving and ergodic (see \cref{ssec:measure_structure} for details). The first main result we obtain relates the Kolmogorov-Sinai metric entropy $h_\med(\shift_\phi)$ of the extended shift $\shift_\phi$ with the metric entropy $\Enm(\mathcal C_0)$ of the partition $\mathcal C_0$ by cylinders $\Cyl{0}{s} = \set{x \in \Shift_{m_-, m_+}}{x_0 = s}$, $0 \leq s < m_+$ (check \cref{ssec:measurable_structure}), which is just the Shannon entropy of $p^+$.
In particular, it shows $\enm(\shift_\phi)$ is not dependent on the transition function $\phi$.

\begin{introtheorem}
\label{thm:measure_entropy_of_extended_shift}
The metric entropy of the $(m_-, m_+)$-extended shift $\shift_\phi$ (with respect to the measure $\med$ induced by the probability distribution $p^+$) is
    \begin{equation*}
    \enm(\shift_\phi) = \Enm(\mathcal C_0) = \sum_{s=0}^{m_+-1} -p^+_s \log(p^+_s).
    \end{equation*}
\end{introtheorem}

The probability distribution $p^+$ also induces a probability distribution $p^- = (p^-_0, \ldots, p^-_{m_- -1})$ on the backward alphabet $\{0, \ldots, m_- -1\}$, and these probability distributions are used to define ``quotient distributions'' $q^{s_-}$ for each $s_- \in \{0, \ldots, m_- -1\}$ (these are related to disintegrations, cf. \cref{ssec:disintegration} for details).

Our second main result relates the folding entropy $\mathcal{F}(\shift_\phi)$ of the extended shift with the metric entropy of the partitions $\mathcal C_0$ and $\mathcal C_{-1}$ by cylinders $\Cyl{0}{s}$ and $\Cyl{-1}{z}$, respectively, and explicitly to a formula involving the quotient distributions $q^{s_-}$.

\begin{introtheorem}
\label{thm:folding_entropy_formula}
The folding entropy of the $(m_-, m_+)$-extended shift $\shift_\phi$ (with respect to the measure $\med$ induced by the probability distribution $p^+$) is
	\begin{equation*}
    \mathcal{F}(\shift_\phi) = \Enm(\mathcal C_0) - \Enm(\mathcal C_{-1}) = \sum_{s_- = 0}^{m_--1} \Big(\smashoperator[r]{\sum_{s_+ \in \phi\inv(s_-)}} -q^{s_-}_{s_+} \log q^{s_-}_{s_+} \Big) p^-_{s_-}.
	\end{equation*}
\end{introtheorem}

The extended shift is in fact the standard bilateral Bernoulli shift when $m_+ = m_-$, which is invertible and has zero folding entropy. That illustrates how the folding entropy measures the non-invertibility of the system.

\notocsubsection{Structure of the paper}

\Cref{sec:preliminaries} contains basic nomenclature, notation and definitions from measure theory and ergodic theory that we will use throughout the paper.
In \Cref{sec:extended_shifts} we present the formal definition of the extended shift $\fun{\shift_\phi}{\Shift_{m_-,m_+}}{\Shift_{m_-,m_+}}$ and define their measure structure. In \cref{sec:proof_zip_shift_entropy} we calculate the Kolmogorov--Sinai entropy of the extended shifts. We find the general form for cylinder sets pulled-back by the extended shift dynamics, which involves proving a technically difficult \cref{prop:correfinamento.cilindros}, then use the Kolmogorov--Sinai theorem to prove \cref{thm:measure_entropy_of_extended_shift}. Finally, in \cref{sec:proof_folding_entropy_formula} we calculate the folding entropy of the extended shifts. For this we calculate the disintegration of the measure $\med$ with respect to the dynamical pullback of the atomic partition. After the technical work of \cref{ssec:quotient_measure,ssec:disintegration}, we obtain \cref{thm:folding_entropy_formula}.

\section{Preliminaries}
\label{sec:preliminaries}

We will denote the natural numbers (including $0$) by $\N$, the integers by $\Z$ and the real numbers by $\R$. We denote the strictly positive, positive, strictly negative, and negative integers by $\Z_{> 0}$, $\Z_{\geq 0}$, $\Z_{< 0}$, and $\Z_{\leq 0}$, respectively (and likewise for the other number sets).

\subsection{Measure spaces}

Let $X$ be a set. A \emph{$\sigma$-algebra} over $X$ is a family $\mens$ of subsets of $X$, whose elements are called \emph{measurable sets}, that contains the empty set and is closed under set complements and countable unions. The pair $(X, \mens)$ is called a \emph{measurable space}. Given any family $\mathcal{S}$ of subsets of $X$, the $\sigma$-algebra \emph{generated} by $\mathcal{S}$ is the smallest (relative to $\subseteq$) $\sigma$-algebra over $X$ that contains $\mathcal{S}$.

A \emph{measure} on $(X, \mens)$ is a function $\fun{\med}{\mens}{\R_{\geq 0}}$ that assigns the value $0$ to the empty set and is \emph{countably additive}, meaning that, for every pairwise disjoint countable family of measurable sets $(M_i)_{i \in \N}$,
	\begin{equation*}
	\med \big( \bigcup_{i \in \N} M_i \big) = \sum_{i \in \N} \med(M_i).
	\end{equation*}
The triple $(X, \mathcal M, \med)$ is called a \emph{measure space}. A \emph{probability measure} is a measure such that $\med(X)=1$, and the respective measure space is called a \emph{probability space}. We say that a property is valid for \emph{almost every point of $X$} when it is valid for every point of a subset of $X$ whose complement has measure $0$.

A \emph{measurable transformation} from a measure space $(X, \mens)$ to another $(X', \mens')$ is a transformation $\fun{f}{X}{X'}$ such that, for every measurable set $M' \in \mens'$, its inverse image by $f$ is measurable: $f\inv(M') \in \mens$. A \emph{measure-preserving transformation} from a measure space $(X, \mens, \med)$ to another $(X', \mens', \med')$ is a measurable transformation $\fun{f}{X}{X'}$ such that, for every measurable set $M' \in \mens'$, $\med(f\inv(M')) = \med'(M')$.

On a measure space, the \emph{integral} can be defined for functions $\fun{f}{X}{\R}$. We will denote the integral of $f$ with respect to $\med$ over a measurable set $M \subseteq X$ by $\int_M f \med$, or by $\int_{x \in M}f(x)\med(\dd x)$ when it is necessary to make the argument $x$ of $f$ explicit.

\subsection{Metric entropy}

Metric entropy was first defined by Kolmo\-go\-rov and Sinai and used as an invariant for dynamical systems over measure spaces \cite{Kolmogorov1958,Sinai1959}. Here we briefly define it and state the main theorem we will use in this work, the Kolmogorov--Sinai theorem (\cref{prop:Kolmogorov-SinaiTheorem}). We refer the reader to \cite{liv:VianaOliveira-FoundationsErgodicTheory} for the following definitions and any further information on metric entropy.

Let $(X, \mathcal M, \med)$ be a probability space. We will refer to any finite or countable family $\parti$ of pairwise disjoint measurable sets whose union has measure $1$ by a \emph{partition} of $X$. (This is similar to the usual definition of a partition, but weakened by the measure structure of the space). This defines, for almost every point $x \in X$, a unique set $\proj_{\parti}(x) \in \parti$ such that $x \in \proj_{\parti}(x)$, 
and hence a (almost everywhere defined) projection $\fun{\proj_{\parti}}{X}{\parti}$.

A partition $\parti$ is \emph{coarser} than a partition $\parti'$ (or $\parti'$ is \emph{finer} than $\parti$) when, for every element $P' \in \parti'$, there exists an element $P \in \parti$ such that $\med(P' \setminus P) = 0$ (which means that almost every point of $P'$ is contained in $P$). This is denoted by $\parti \refin \parti'$. We can also define an operation on the partitions: to each (finite or countable) family of partitions $(\parti_n)_{n \in \N}$, its \emph{correfinement} is
	\begin{equation*}
	\bigcorref_{i \in \N} \parti_n := \set{\bigcap_{n \in \N} P_n}{P_n \in \parti_n \text{ for each $n \in \N$}}.
	\end{equation*}
When we have only $2$ (of finitely many) partitions, we denote their correfinement by $\parti \corref \parti'$. The correfinement of a family of partitions is the smallest partition, relative to $\refin$, that is larger than every partition of the family.

The \emph{entropy} of $\parti$ is defined as
	\begin{equation}
	\label{eq:entropy_of_countable_partition}
	\Enm(\parti) := \sum_{P \in \parti} -\med(P)\log(\med(P)).
	\end{equation}
(Here and in what follows, we always assume that $0 \log 0 = 0$.)

Now let $\fun{f}{M}{M}$ be a measure-preserving transformation on $(X, \mathcal M, \med)$. We can define the \emph{pullback} of a partition $\parti$ by $f$ as
	\begin{equation*}
	f\inv(\parti) := \set{f\inv(P)}{P \in \parti}.
	\end{equation*}
This is also a partition in our specific sense. Then, for each $n \in \N$, the \emph{$n$-th dynamical correfinement} of $\parti$ is
	\begin{equation}
	\label{def:dynamical_correfinement}
	\parti^n := \bigcorref_{i=0}^{n-1} f^{-i}(\parti)
	\end{equation}
and the \emph{$n$-th bilateral dynamical correfinement} of $\parti$ is
	\begin{equation*}
	\parti^{\pm n} := \bigcorref_{i=-n}^{n-1} f^{-i}(\parti).
	\end{equation*}
An element $Q \in \parti^n$ is of the form $Q = P_0 \cap f\inv(P_1) \cap \cdots \cap f^{-(n-1)}(P_{n-1})$, for $P_i \in \parti$, and a point $x \in X$ belongs to $Q$ if, and only if, for every $0 \leq i \leq n-1$, $f^i(x) \in P_i$. This shows that the elements of $\parti^n$ partition the space into points which have the same orbit under $f$ for $n$ units of time.

The \emph{entropy of $f$} relative to $\parti$ is the limit
	\begin{equation*}
	\enm(f, \parti) := \lim_{n \to \infty} \frac{1}{n} \Enm(\parti^n).
	\end{equation*}
(Notice that $\parti^n$ depends on $f$ even though the notation does not make it explicit). The \emph{entropy} of $f$ is then the supremum of the entropies relative to all partitions with finite entropy (or, equivalently, finite partitions):
	\begin{equation*}
	\enm(f) := \sup_{\parti} \enm(f, \parti).
	\end{equation*}
This definition is very abstract and requires information about every finite partition, but there is a way to calculate the entropy of a transformation using only a sequence of partitions that have a special property. This is the content of the following theorem,
which we are going to use to obtain \cref{thm:measure_entropy_of_extended_shift}. The proof can be found in \cite{liv:VianaOliveira-FoundationsErgodicTheory}.

\begin{theorem}[Kolmogorov-Sinai]
\label{prop:Kolmogorov-SinaiTheorem}
Let $(X, \mens, \med)$ be a probability space, $\fun{f}{X}{X}$ a measure-preserving transformation and $(\parti_n)_{n \in \N}$ be an increasing sequence of partitions%
	\footnote{
		That is, for every $n, m \in \N$, if $n \leq m$ then $\parti_n \refin \parti_m$.
	}
with finite entropy such that $\bigcup_{i \in \N} \parti_i$ generates $\mens$ (up to measure $0$).
Then
	\begin{equation*}
	\enm(f) = \lim_{n \to \infty} \enm(f, \parti_n).
	\end{equation*}
\end{theorem}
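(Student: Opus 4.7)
My plan is to prove the equality by establishing two inequalities, with the notion of conditional entropy as the main auxiliary tool.

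First, I would observe that the sequence $\enm(f, \parti_n)$ is non-decreasing. Since $\parti_n \refin \parti_m$ whenever $n \leq m$, one checks inductively from the definition of dynamical correfinement that $(\parti_n)^k \refin (\parti_m)^k$ for every $k \in \N$, hence $\Enm((\parti_n)^k) \leq \Enm((\parti_m)^k)$ and therefore $\enm(f, \parti_n) \leq \enm(f, \parti_m)$. Thus $\lim_n \enm(f, \parti_n) = \sup_n \enm(f, \parti_n)$ exists, and by the very definition of $\enm(f)$ as a supremum over partitions with finite entropy, this limit is automatically bounded above by $\enm(f)$. This gives one inequality for free.

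For the reverse inequality it suffices to show that every finite partition $\mathcal Q$ satisfies $\enm(f, \mathcal Q) \leq \lim_n \enm(f, \parti_n)$. I would rely on the classical conditional entropy $\Enm(\mathcal Q \mid \parti)$ and its two-variable bound $\enm(f, \mathcal Q) \leq \enm(f, \parti) + \Enm(\mathcal Q \mid \parti)$, which in turn follows from the subadditivity $\Enm(\mathcal A \corref \mathcal B) \leq \Enm(\mathcal B) + \Enm(\mathcal A \mid \mathcal B)$ applied to the dynamical correfinements $\parti^n$ and $\mathcal Q^n$, together with the affine bound $\Enm(\mathcal Q^n \mid \parti^n) \leq n\,\Enm(\mathcal Q \mid \parti)$. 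Setting $\parti = \parti_n$ reduces the problem to proving $\Enm(\mathcal Q \mid \parti_n) \to 0$ as $n \to \infty$.

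For this last step I would exploit the generating hypothesis: since $\bigcup_i \parti_i$ generates $\mens$ modulo null sets, every atom of the finite partition $\mathcal Q$ can be approximated in the symmetric-difference pseudometric by finite unions of atoms of $\parti_n$ as $n$ grows. Continuity of conditional entropy in this pseudometric, or equivalently L\'evy's upward theorem applied to the increasing filtration generated by $\parti_n$, then forces $\Enm(\mathcal Q \mid \parti_n) \to 0$, completing the argument. I expect the main obstacle to be technical rather than conceptual: one must develop enough of the conditional-entropy machinery to justify the two-variable bound and the continuity-under-refinement statement. Both are standard and are carried out in the cited Viana--Oliveira reference, but neither follows immediately from the definitions reviewed in the preliminaries.
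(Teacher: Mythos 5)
Your outline is correct and is exactly the standard argument: monotonicity of $\enm(f,\parti_n)$ under refinement gives $\lim_n \enm(f,\parti_n) \leq \enm(f)$ for free, and the reverse inequality follows from the bound $\enm(f,\mathcal Q) \leq \enm(f,\parti_n) + \Enm(\mathcal Q \mid \parti_n)$ for finite $\mathcal Q$ together with the fact that $\Enm(\mathcal Q \mid \parti_n) \to 0$ for an increasing generating sequence. The paper does not reprove this theorem but only cites Viana--Oliveira (Theorem 9.2.1), whose proof proceeds along precisely the route you describe, so there is nothing to reconcile beyond filling in the standard conditional-entropy lemmas you already identify.
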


\subsection{Disintegration of measure}

Given a probability space $(X, \mens, \med)$ and a partition $\parti$ (we do not require the partition to be countable here), we have the (almost everywhere defined) natural projection $\fun{\proj_{\parti}}{X}{\parti}$. Using $\proj_{\parti}$ we can pushforward a probability space structure onto $\parti$, namely $(\parti, \hat\mens, \hat\med)$, in which
	\begin{equation*}
	\hat \mens := \set{\mathcal{Q} \subseteq \parti}{\proj_{\parti}\inv(\mathcal{Q}) \in \mens}
	\end{equation*}
is the pushforward $\sigma$-algebra (or quotient $\sigma$-algebra) and
	\begin{equation*}
	\hat \med (\mathcal{Q}) := \med(\proj_{\parti}\inv(\mathcal{Q})) \quad\text{for every $\mathcal{Q} \in \hat \mens$}
	\end{equation*}
is the pushforward measure (or quotient measure).

\begin{definition}
Let $(X, \mens, \med)$ be a probability space and $\parti$ a partition of $X$. A \emph{disintegration} of $\med$ with respect to $\parti$ is a family of probability measures $(\med_P)_{P \in \parti}$ on $X$ such that
	\begin{enumerate}
	\item For almost every $P \in \parti$, $\med_P(P) = 1$;
	\item For every measurable set $M \in \mens$, the transformation $\parti \to \R$, $P \mapsto \med_P(M)$ is measurable;
	\item For every measurable set $M \in \mens$,
		\begin{equation}
		\label{eq:disintegration_formula}
		\med(M) = \int_{P \in \parti} \med_P(M)\hat\med(\dd P).
		\end{equation}
	\end{enumerate}
\end{definition}

Intuitively, this describes the way we can relate the Lebesgue measure on a square with the Lebesgue measure on each of its vertical sections by integration using Fubini's theorem.

Under certain conditions on the partition $\parti$, the disintegration of a measure is unique up to measure zero and always exists \cite{liv:VianaOliveira-FoundationsErgodicTheory}.

\subsection{Conditional and folding entropies}
\label{sec:conditional_entropy_and_folding_entropy}

Besides defining the entropy of a partition as in \cref{eq:entropy_of_countable_partition}, we can also define the conditional entropy of a partition $\parti$ relative to a partition $\parti'$. We follow the approach of \cite{art:Rokhlin-LecturesEntropyTheoryMeasurePreservingTransformations}. First we define, for each $P' \in \parti'$, the \emph{partition induced by $\parti$ on $P'$} as
	\begin{equation*}
	\parti|_{P'} := \set{P \cap P'}{P \in \parti}.
	\end{equation*}
Then the \emph{conditional entropy of $\parti$ with respect to $\parti'$} is defined \cite[Section~5.1]{art:Rokhlin-LecturesEntropyTheoryMeasurePreservingTransformations} using the disintegration of the measure $\med$ with respect to $\parti'$ by 
	\begin{equation}
	\label{eq:conditional_entropy_using_disintegration}
	\Enm(\parti \mid \parti') = \int_{P' \in \parti'} \Enm[\med_{P'}](\parti|_{P'}) \med_{\parti'}(\dd P').
	\end{equation}

This is a more general definition that works for non-countable partitions. In the case that the partitions are countable, we obtain the simplified formula presented in \cite{liv:VianaOliveira-FoundationsErgodicTheory}.

In \cite{Ruelle} the author introduces the \textit{folding entropy} for $\Cont^1$ transformations. It can be defined \cite{art:Liu-RuelleInequalityRelatingEntropyFoldingEntropyNegativeLyapunovExponents,Wu-Zhu} as the conditional entropy of the \emph{atomic partition}
	\begin{equation*}
	\epsilon := \set{\{x\}}{x \in X}
	\end{equation*}
with respect to its \emph{dynamical pullback}
	\begin{equation*}
	f\inv(\epsilon) = \set{f\inv(x)}{x \in X}.
	\end{equation*}

\begin{definition}
\label{def:folding_entropy}
Let $X$ be a probability space and $\fun{f}{X}{X}$ a measure-preserving transformation. The \emph{folding entropy} of $f$ with respect to $\med$ is
	\begin{equation*}
	\mathcal F(f) := \Enm(\epsilon \mid f\inv(\epsilon)).
	\end{equation*}
\end{definition}

\section{Extended shifts}
\label{sec:extended_shifts}

The extended shifts, also known as zip shifts, are a generalization of bilateral symbolic shifts. They were first introduced in \cite{Mehdipour-Lamei2, Mehdipour-Lamei1} and  expanded on in \cite{Mehdipour-Martins}. Instead of a single set of symbols ranging in $\{0, \ldots, m -1\}$, used to compose a bilateral symbolic sequence $x = (x_i)_{i \in \Z} \in \{0, \ldots, m -1\}^{\Z}$, we consider bilateral sequences that have symbols ranging in $\{0, \ldots, m_+ -1\}$ on their positive part and in $\{0, \ldots, m_- -1\}$ on their negative part.
To be able to still define the shift transformation, a function that translates one type of symbols to the other is needed. \Cref{def:extended_shift_space} formalizes this construction.

Let us first recall that, for $m \in \Z_{>0}$, we define the \emph{symbolic spaces} as $\Shift_{m} := \{0, \ldots, m-1\}^{\Z}$, $\Shift^+_{m} := \{0, \ldots, m-1\}^{\Z_{\geq 0}}$ and $\Shift^-_{m} := \{0, \ldots, m-1\}^{\Z_{<0}}$, the \emph{bilateral shift map} as $\fun{\shift}{\Shift_m}{\Shift_m}, (\ldots, x_{-1}; x_0, x_1, \ldots) \mapsto (\ldots, x_{-1}, x_0; x_1, \ldots)$, the \emph{unilateral shift} as $\fun{\shift}{\Shift^+_m}{\Shift^+_m}, (x_0, x_1, \ldots) \mapsto (x_1, x_2, \ldots)$, and, for each $s \in \{0, \ldots, m -1\}$, the \emph{inverted unilateral shift map with inserted symbol} $s$ as $\fun{\shift\inv_s}{\Shift^+_m}{\Shift^+_m}, (x_0, x_1, \ldots) \mapsto (s, x_0, x_1, \ldots)$. The unilateral shift and inverted unilateral shift are defined analogously on $\Shift^-_{m}$.

\begin{definition}
\label{def:extended_shift_space}
Let $m_-, m_+ \in \Z_{>0}$ be strictly positive integers and $\fun{\phi}{ \{0, \ldots, m_+ -1\} }{ \{0, \ldots, m_--1\} }$ a surjective function. The \emph{$(m_-, m_+)$-extended shift} dynamical system (with \emph{transition function} $\phi$) is the system $(\Shift_{m_-,m_+}, \shift_\phi)$ consisting of
    \begin{enumerate}
        \item the \emph{$(m_-, m_+)$-symbolic space} $\Shift_{m_-,m_+} := \Shift^-_{m_-} \times \Shift^+_{m_+}$, whose elements are pairs $x = (x^-, x^+)$ with $x^- = (\ldots, x_{-2}, x_{-1})$ and $x^+ = (x_0, x_1, \ldots)$, which can be identified with bilateral sequences
            \begin{equation*}
                x = (\ldots, x_{-2}, x_{-1}; x_0, x_1, \ldots)
            \end{equation*}
        with $x_{-1}, x_{-2}, \ldots \in \{0, \ldots, m_- -1\}$ and $x_0, x_1, \ldots \in \{0, \ldots, m_+ -1\}$;

        \item the \emph{$(m_-, m_+)$-extended shift} $\fun{\shift_\phi}{\Shift_{m_-,m_+}}{\Shift_{m_-,m_+}}$, which acts on each $x \in \Shift_{m_-,m_+}$ by
            \begin{equation*}
                \shift_{\phi}(x) := (\shift\inv_{\phi(x_0)}(x^-), \shift(x^+)) = (\ldots, x_{-1}, \phi(x_0); x_1, x_2, \ldots)
            \end{equation*}
    \end{enumerate}
\end{definition}

We can also define a notion of an extended Bernoulli transformation.

\begin{definition}
A measure-preserving map $\fun{f}{X}{X}$ defined on a Lebesgue space is an $(m_-, m_+)$-Bernoulli transformation if it is isomorphic (mod 0) to an $(m_-, m_+)$-extended shift $\shift_\phi$.
\end{definition} 

The $2$-to-$1$ baker's transformation defined in \cite{Mehdipour-Martins} exemplifies a $(2,4)$-Bernoulli transformation. We omit the formal definition here, but \cref{fig:baker_2-to-1} shows how this transformation is defined on the square $Q$ in three steps, and \cref{fig:baker_2-to-1_partition+partitions_iterated} shows the partitions of the square that are used to encode the system and obtain the isomorphism to a $(2,4)$-extended shift and how these partitions iterate under the action of the dynamics over time.

\begin{figure}
	\centering
	\includesvg{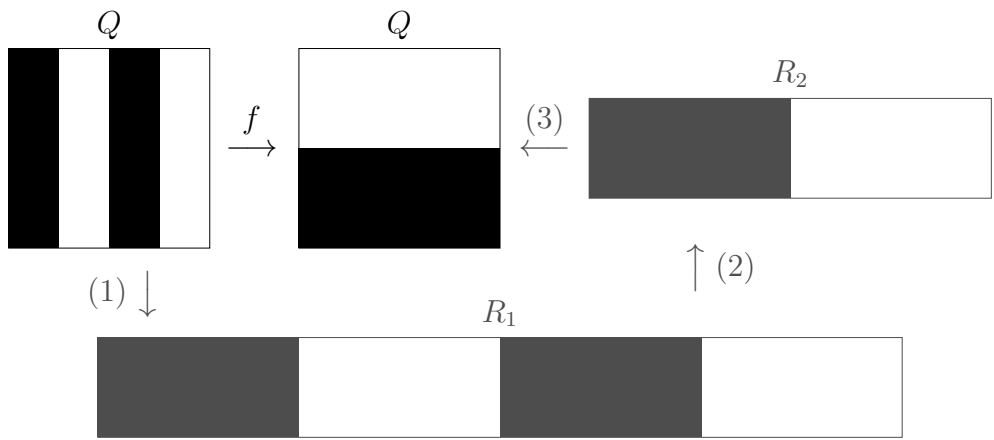}
	\caption[The $2$-to-$1$ baker's transformation.]{
		The $2$-to-$1$ baker's transformation.
		On step (1), the square $Q$ is dilated by $4$ in the horizontal direction and by $\frac{1}{2}$ in the vertical direction, resulting in a rectangle $R_1$.
		On step (2), the right-most bands of the rectangle $R_1$ are glued over the left-most ones, resulting in a smaller rectangle $R_2$.
		On step (3), the left right band of the rectangle $R_2$ is glued above the left band, resulting again in the square $Q$.
	}
	\label{fig:baker_2-to-1}
\end{figure}

\begin{figure}
	\centering
	\includesvg{images/baker_2-to-1_partition+partitions_iterated}
	\caption[The horizontal and vertical partitions of the $2$-to-$1$ baker's transformation and their iterations.]{
		The first $2$ iterations of the horizontal partition $\mathcal H = \{H_a, H_b\}$ and the vertical partition $\mathcal V = \{V_0, V_1, V_2, V_3\}$ of the $2$-to-$1$ baker's transformation. In the limit, the iterations of the horizontal partition is the partition of the square by horizontal line, and the iteration of the vertical partition is the partition by vertical lines.
	}
	\label{fig:baker_2-to-1_partition+partitions_iterated}
\end{figure}

\subsection{Measurable structure}
\label{ssec:measurable_structure}

The $\sigma$-algebra $\mathcal B$ of the space $\Shift_{m_-,m_+}$ is the one generated by cylinder sets: for each $i \in \Z$, and for each $s \in \{0, \ldots, m_--1\}$ if $i < 0$ or $s \in \{0, \ldots, m_+ -1\}$ if $i \geq 0$, we define the \emph{cylinder}
    \begin{equation*}
    \Cyl{i}{s} := \set{x \in \Shift_{m_-,m_+}}{x_i = s}.
    \end{equation*}
and denote
    \begin{equation*}
    \Cyl{i_1,\ldots,i_k}{s_1,\ldots, s_k} := \set{x \in \Shift_{m_-,m_+}}{x_{i_1} = s_1, \ldots, x_{i_k} = s_k} = \Cyl{i_1}{s_1} \cap \cdots \cap \Cyl{i_k}{s_k}.
    \end{equation*}
For simplicity, when the $i_j$'s increase by $1$, we just denote this by $\Cyl{i_1}{s_1, \ldots, s_k}$.
We also define, for $s_+ \in \{0, \ldots, m_+-1\}$, the \emph{extended cylinder}
	\begin{equation*}
	\Cyl{i}{\phi\inv(s_+)} := \bigcup_{s_- \in \phi\inv(s_+)} \Cyl{i}{s_-}.
	\end{equation*}

The next proposition shows how the dynamics acts backwards and forwards on cylinders.

\begin{proposition}
\label{prop:extended_shift_iterated_cylinders}
Let $k \in \Z_{\geq 0}$ and $0 \leq s < \max\{m_-,m_+\}$. Then
    \begin{equation*}
   	\shift_\phi^{-k}(\Cyl{i}{s}) =
   		\begin{cases}
   		\Cyl{i+k}{s}, & i \notin [-k, -1] \cap \Z \\
   		\Cyl{i+k}{\phi\inv(s)}, & i \in [-k, -1] \cap \Z.
   		\end{cases}
   	\end{equation*}
and
    \begin{equation*}
   	\shift_\phi^k(\Cyl{i}{s}) =
   		\begin{cases}
   		\Cyl{i-k}{s}, & i \notin [0, k-1] \cap \Z \\
   		\Cyl{i-k}{\phi(s)}, & i \in [0, k-1] \cap \Z.
   		\end{cases}
   	\end{equation*}
\end{proposition}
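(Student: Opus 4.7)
The plan is to prove both formulas by induction on $k$, reducing everything to the one-step case. The base case $k = 0$ is trivial since $\shift^0$ is the identity and the intervals $[-k, -1]$ and $[0, k-1]$ are empty. For $k = 1$, the defining formula of $\shift$ in \Cref{def:extended_shift_space} gives that $(\shift(x))_i = s$ holds iff $x_{i+1} = s$ when $i \neq -1$, and iff $\phi(x_0) = s$ when $i = -1$; this reads off to $\shift\inv(C_i^s) = C_{i+1}^s$ for $i \neq -1$ and $\shift\inv(C_{-1}^s) = C_0^{\phi\inv(s)}$. For the forward case with $k = 1$, an analogous direct computation, together with the surjectivity of $\phi$ (needed to ensure every candidate point actually arises as an image), yields $\shift(C_i^s) = C_{i-1}^s$ for $i \neq 0$ and $\shift(C_0^s) = C_{-1}^{\phi(s)}$.

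For the inductive step for the preimage formula, I would write $\shift^{-(k+1)}(C_i^s) = \shift\inv(\shift^{-k}(C_i^s))$ and apply the hypothesis to the inside, producing either $C_{i+k}^s$ or $C_{i+k}^{\phi\inv(s)}$ depending on whether $i \in [-k, -1]$, and then apply the $k = 1$ base case to each elementary cylinder in the (possibly extended) union. The key bookkeeping observation is that the transition $\phi$ is introduced at the one-step level precisely when the second index equals $-1$; for $\shift^{-(k+1)}$ this happens exactly when $i = -(k+1)$, which is the single new index entering the interval $[-(k+1), -1]$ in the statement for $k+1$. A completely symmetric induction handles the forward formula, with $\phi$ entering exactly when $i = k$ (i.e.\ when the second index becomes $0$), the single new index entering $[0, k]$.

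The main obstacle is simply the case analysis: one must carefully track whether the index $i + k$ (resp.\ $i - k$) already lies inside, just enters, or lies outside the relevant interval, and in particular whether it equals the boundary value $-1$ (resp.\ $0$). A cleaner packaging is to first establish by a single induction the closed form
\[
(\shift^k(x))_i =
\begin{cases}
x_{i+k} & \text{if } i \geq 0 \text{ or } i \leq -k-1, \\
\phi(x_{i+k}) & \text{if } -k \leq i \leq -1,
\end{cases}
\]
and then obtain both statements of the proposition by directly translating the condition $(\shift^k(x))_i = s$ (for the preimage formula) and by characterizing when $y$ has a preimage in $C_i^s$ using the surjectivity of $\phi$ (for the image formula).
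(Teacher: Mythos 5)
Your proposal is correct and follows essentially the same route as the paper: establish the one-step formulas $\shift\inv(C_i^s)$ and $\shift(C_i^s)$ directly from \Cref{def:extended_shift_space} and then iterate by induction, with $\phi\inv$ (resp.\ $\phi$) entering exactly at the single new index $i=-(k+1)$ (resp.\ $i=k$). You actually supply more detail than the paper, which states the one-step case and simply invokes induction; your closed form for $(\shift^k(x))_i$ is a valid alternative packaging but not needed.
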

\begin{proof}
For the inverse image, it holds that
	\begin{equation*}
	\shift_\phi\inv(\Cyl{i}{s}) =
		\begin{cases}
		\Cyl{i+1}{s}, & i \neq -1 \\
		\Cyl{0}{\phi\inv(s)}, & i = -1
		\end{cases}
	\end{equation*}
and, for the direct image, it holds that
	\begin{equation*}
	\shift_\phi(\Cyl{i}{s}) =
		\begin{cases}
		\Cyl{i-1}{s}, & i \neq 0 \\
		\Cyl{-1}{\phi(s)}, & i = 0.
		\end{cases}
	\end{equation*}
Then, by induction on $k$, we obtain the statement.
\end{proof}

\subsection{Measure structure}
\label{ssec:measure_structure}

In order to define a measure on $(\Shift_{m_-,m_+}, \mathcal B)$, it is sufficient to define it on the cylinders $\Cyl{i}{s}$. We start with a probability measure $\med^+$ on the symbol set $\{0, \ldots, m_+ -1\}$. Since $\{0, \ldots, m_+ -1\}$ is a finite set with atomic $\sigma$-algebra, this probability measure can be identified with a discrete probability distribution
$p^+ = (p^+_0, \ldots, p^+_{m_+ -1})$ (that is, for every $s \in \{0, \ldots, m_+ -1\}$ we have $p^+_{s} \in \R_{\geq 0}$, and $\sum_{s=0}^{m_+ -1} p^+_s = 1$) 
by defining, for each $0 \leq s < m_+$,
	\begin{equation*}
	p^+_{s} := \med^+(\{s\}).
	\end{equation*}

Using the surjective transition function $\fun{\phi}{\{0, \ldots, m_+ -1\}}{\{0, \ldots, m_- -1\}}$, we can pushforward this probability measure $\med^+$ to the probability measure $\med^- := \phi\emp \med^+$ on $\{0, \ldots, m_- -1\}$. This is done by considering the partition $\{\phi\inv(z)\}_{z \in \{0, \ldots, m_--1\}}$ of $\{0, \ldots, m_+ -1\}$ by the inverse images of elements of $\{0, \ldots, m_--1\}$. The pushforward measure of $\{z\} \subseteq \{0, \ldots, m_--1\}$ is then the sum of the measure of all the elements of $\phi\inv(z)$ on $\{0, \ldots, m_+ -1\}$, given for each $z \in \{0, \ldots, m_--1\}$ by
	\begin{equation*}
	\med^-(\{z\}) = \phi\emp \med^+(\{z\}) = \med^+(\phi\inv(\{z\})) = \sum_{s \in \phi\inv(z)} \med^+(\{s\}).
	\end{equation*}
In the same way as we did for $p^+$, we can identify the measure $\med^-$ with a probability distribution $p^- = (p^-_0, \ldots, p^-_{m_--1})$ by setting, for each $0 \leq z < m_-$,
	\begin{equation*}
	p^-_{z} := p^-(\{z\}).
	\end{equation*}
Then, for a cylinder $\Cyl{i}{s}$, we can define its measure as $p^+_s$ if $i \geq 0$ and $p^-_s$ if $i<0$.

\begin{definition}
\label{def:extended_shift_measure}
Let $(\Shift_{m_-,m_+}, \shift_\phi)$ be an $(m_-, m_+)$-extended shift space, $p^+$ a probability measure on $\{0, \ldots, m_+ -1\}$ and $p^- = \phi\emp p^+$ the pushforward probability measure on $\{0, \ldots, m_--1\}$.
The \emph{probability measure} on $(\Shift_{m_-,m_+}, \shift_\phi)$ induced by $p^+$ is the probability measure $\fun{\med}{\mathcal B}{\intff{0}{1}}$
defined on cylinders by
	\begin{equation*}
	\med(\Cyl{i}{s}) :=
		\begin{cases}
		p^-_s, & i < 0 \\
		p^+_s, & i \geq 0
		\end{cases}
	=
		\begin{cases}
		\displaystyle\sum_{s' \in \phi\inv(s)} p^+_{s'}, & i < 0 \\
		p^+_s, & i \geq 0.
		\end{cases}
	\end{equation*}
\end{definition}

From the way we defined the measure on $\{0, \ldots, m_--1\}$ by the pushforward, it is easy to show that the extended shift dynamics is measure-preserving. We just need to be careful considering the different cases.

\begin{proposition}
\label{prop:extended_shift_preserves_measure}
Let $(\Shift_{m_-,m_+}, \shift_\phi)$ be an $(m_-, m_+)$-extended shift space and $p^+$ a probability measure on $\{0, \ldots, m_+ -1\}$. The dynamics $\shift_\phi$ preserves the measure $\med$.
\end{proposition}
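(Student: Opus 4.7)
The plan is to verify the identity $\med(\shift\inv(M)) = \med(M)$ on the $\pi$-system of (basic) cylinder sets, which generates $\mathcal B$, and then appeal to the standard uniqueness theorem for probability measures to extend the identity to all of $\mathcal B$. Since $\shift$ is measurable and $\med$ is a probability measure, $\med \circ \shift\inv$ is itself a probability measure on $(\Shift, \mathcal B)$, so agreement on a generating $\pi$-system is enough.

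For a single cylinder $C^s_i$, I would apply \Cref{prop:extended_shift_iterated_cylinders} with $k = 1$ and split by the value of $i$. When $i < -1$, $\shift\inv(C^s_i) = C^s_{i+1}$ still has a strictly negative index, so both sides of the desired identity evaluate to $p^-_s$; when $i \geq 0$, $\shift\inv(C^s_i) = C^s_{i+1}$ has a nonnegative index and both sides evaluate to $p^+_s$. The only delicate case is $i = -1$, where $\shift\inv(C^s_{-1}) = C^{\phi\inv(s)}_0 = \bigsqcup_{s' \in \phi\inv(s)} C^{s'}_0$ is a disjoint union, and countable additivity combined with the defining relation $p^- = \phi\emp p^+$ gives
	\begin{equation*}
	\med(\shift\inv(C^s_{-1})) = \sum_{s' \in \phi\inv(s)} p^+_{s'} = p^-_s = \med(C^s_{-1}).
	\end{equation*}
This is precisely the step for which the pushforward construction of \cref{def:extended_shift_measure} was designed.

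For basic cylinders supported at finitely many indices, the argument extends coordinatewise: $\shift\inv$ distributes over finite intersections, and the resulting analysis runs in parallel at each index, with the product structure of $\med$ on basic cylinders reducing the computation to the single-index case above. The only technical point is bookkeeping around the index $-1$: when a coordinate at position $-1$ is present, its preimage must be expanded as a disjoint union over $\phi\inv$, and the identity $p^-_{s^-} = \sum_{s^+ \in \phi\inv(s^-)} p^+_{s^+}$ absorbs this expansion so that the total measure matches the original. I do not anticipate any conceptual obstacle beyond this case analysis, which is already encoded in \Cref{prop:extended_shift_iterated_cylinders}; the hard part is simply notational care at the boundary between negative and nonnegative indices.
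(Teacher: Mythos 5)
Your proof is correct and follows essentially the same route as the paper: both verify $\med(\shift\inv(C^s_i)) = \med(C^s_i)$ on cylinders via the same three-case split ($i \geq 0$, $i < -1$, $i = -1$), with the pushforward identity $p^-_s = \sum_{s' \in \phi\inv(s)} p^+_{s'}$ handling the boundary case. Your additional remarks on the $\pi$-system of multi-index cylinders and the uniqueness theorem make explicit a step the paper leaves implicit, but this is added rigor on the same argument, not a different one.
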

\begin{proof}
It suffices to show that, for every basic cylinder $\Cyl{i}{s}$,
	\begin{equation*}
	\med(\shift_\phi\inv(\Cyl{i}{s})) = \med(\Cyl{i}{s}).
	\end{equation*}
We consider three cases:
\begin{enumerate}
\item ($i \geq 0$) In this case, $\shift_\phi^{-1}(\Cyl{i}{s}) = \Cyl{i+1}{s}$ (\cref{prop:extended_shift_iterated_cylinders}). Since $i+1 \geq 1$, if follows from \cref{def:extended_shift_measure} that
	\begin{equation*}
	\med(\shift_\phi\inv(\Cyl{i}{s})) = \med(\Cyl{i+1}{s}) = p^+_s = \med(\Cyl{i}{s}).
	\end{equation*}

\item ($i < -1$) In this case, it also holds that $\shift_\phi\inv(\Cyl{i}{s}) = \Cyl{i+1}{s}$ (\cref{prop:extended_shift_iterated_cylinders}). Since $i+1 < 0$, i follows from \cref{def:extended_shift_measure} that
	\begin{equation*}
	\med(\shift_\phi\inv(\Cyl{i}{s})) = \med(\Cyl{i+1}{s}) = p^-_s = \med(\Cyl{i}{s}).
	\end{equation*}

\item ($i = -1$) In this case, $\shift_\phi\inv(\Cyl{-1}{s}) = \Cyl{0}{\phi\inv(s)} = \bigcup_{s' \in \phi\inv(s)} \Cyl{0}{s'}$ (\cref{prop:extended_shift_iterated_cylinders}). Since $i+1 = 0$, it follows from \cref{def:extended_shift_measure} that
	\begin{equation*}
	\med(\shift_\phi\inv(\Cyl{i}{s})) = \med(\ \ \bigcup_{\mathclap{s' \in \phi\inv(s)}} \Cyl{0}{s'} \ \ ) = \sum_{s' \in\phi\inv(s)} \med(\Cyl{0}{s'}) = \sum_{s' \in\phi\inv(s)} p^+_{s'} = \med(\Cyl{i}{s}).
	\qedhere
	\end{equation*}
\end{enumerate}
\end{proof}

\section{Metric entropy of extended shifts}
\label{sec:proof_zip_shift_entropy}

\subsection{Partitions by cylinders}

We begin by defining some basic partitions of our space.

\begin{definition}
	Let $i \in \Z$. The \emph{partition by cylinders of index $i$} is the partition
	\begin{equation*}
		\mathcal C_i :=
		\begin{cases}
			\set{\Cyl{i}{s}}{s \in \{0, \ldots, m_+-1\}} & i \geq 0 \\
			\set{\Cyl{i}{s}}{s \in \{0, \ldots, m_--1\}} & i < 0. \\
		\end{cases}
	\end{equation*}
	Let $n, n' \in \Z$. The \emph{partition by cylinders of indices from $n$ to $n'$} is the partition
	\begin{equation*}
		\mathcal C_{n, \ldots, n'} := \bigcorref_{i = n}^{n'} \mathcal C_i.
	\end{equation*}
\end{definition}
~
The following simple \namecref{prop:extended_shift_iterated_partitions} sums up how the dynamics of the shift $\shift_{\phi}$ acts on these partitions.

\begin{lemma}
	\label{prop:extended_shift_iterated_partitions}
	For every $i \geq 0$,
	\begin{enumerate}
		\item $\shift_{\phi}^i(\mathcal C_0) = \mathcal C_{-i}$;
		\item $\shift_{\phi}^{-i}(\mathcal C_0) = \mathcal C_{i}$;
		\item $\shift_{\phi}^{-i}(\mathcal C_{-(i+1)}) = \mathcal C_{-1}$;
		\item $\mathcal C_0^n = \mathcal C_{0,\ldots, n-1}$.
		\item $\mathcal C_0^{\pm n} = \mathcal C_{-n,\ldots, n-1}$.
	\end{enumerate}
\end{lemma}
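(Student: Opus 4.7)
The plan is to deduce all five items directly from \cref{prop:extended_shift_iterated_cylinders}, which already describes how a single cylinder transforms under $\shift^{\pm k}$; the whole proof will then be the application of that formula to each cylinder in the relevant partition, followed by a check that the resulting collection is the claimed partition.

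For item (1), I would apply the direct-image formula to each $C_0^s$ with $s\in S^+$: since $0\in[0,i-1]\cap\Z$ for $i\geq 1$, this yields $\shift^i(C_0^s)=C_{-i}^{\phi(s)}$, and the key non-formal step is that surjectivity of $\phi$ gives $\{\phi(s):s\in S^+\}=S^-$, so the image collection coincides with $\mathcal C_{-i}=\{C_{-i}^{s'}\}_{s'\in S^-}$ (the case $i=0$ is trivial). For items (2) and (3), the inverse-image formula lands in the branch of \cref{prop:extended_shift_iterated_cylinders} where no symbol shifting via $\phi$ occurs, because $0\notin[-i,-1]\cap\Z$ and $-(i+1)\notin[-i,-1]\cap\Z$ respectively, so the identification of partitions is immediate from $\shift^{-i}(C_0^s)=C_i^s$ and $\shift^{-i}(C_{-(i+1)}^s)=C_{-1}^s$.

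Items (4) and (5) are obtained by unpacking the definition of the dynamical correfinement \eqref{def:dynamical_correfinement}. For (4), $\mathcal C_0^n=\bigcorref_{i=0}^{n-1}\shift^{-i}(\mathcal C_0)$, and item (2) replaces each term by $\mathcal C_i$, yielding $\mathcal C_{0,\ldots,n-1}$. For (5), I would split the range $-n\leq i\leq n-1$ at $0$, apply item (2) for $i\geq 0$, and apply item (1) in the form $\shift^{|i|}(\mathcal C_0)=\mathcal C_{-|i|}$ for $i<0$, thus obtaining $\mathcal C_{-n,\ldots,n-1}$ over the full range.

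I do not expect any substantive obstacle: the whole argument is bookkeeping on top of \cref{prop:extended_shift_iterated_cylinders}. The only step that uses more than direct substitution is the surjectivity of $\phi$ in item (1); without it the image of $\mathcal C_0$ under $\shift^i$ would miss some cylinders of $\mathcal C_{-i}$, and the equality of partitions would fail.
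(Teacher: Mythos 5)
Your proposal is correct and follows essentially the same route as the paper: both reduce each item to the cylinder formulas of \cref{prop:extended_shift_iterated_cylinders} (you invoke the $k$-step formulas directly where the paper applies the one-step case and inducts), use surjectivity of $\phi$ exactly where you identify it as needed in item (1), and obtain (4) and (5) by unpacking the correfinement definitions.
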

\begin{proof}
	This is a consequence of \cref{prop:extended_shift_iterated_cylinders}.
	\begin{enumerate}
		\item Since $\shift_{\phi}(\Cyl{0}{s}) = \Cyl{-1}{\phi(s)}$ and $\phi$ is surjective, it follows that $\shift_{\phi}(\mathcal C_0) = \mathcal C_{-1}$. By induction, $\shift_{\phi}^i(\mathcal C_0) = \mathcal C_{-i}$.
		
		\item Since $\shift_{\phi}\inv(\Cyl{0}{s}) = \Cyl{1}{s}$, it follows that $\shift_{\phi}\inv(\mathcal C_0) = \mathcal C_1$. By induction, $\shift_{\phi}^{-i}(\mathcal C_0) = \mathcal C_{i}$.
		
		\item Since $\sigma^{-1}(\Cyl{-2}{s}) = \Cyl{-1} {s}$, it follows that $\shift_{\phi}^{-1}(\mathcal C_{-2}) = \mathcal C_{-1}$. By induction, $\shift_{\phi}^{-i}(\mathcal C_{-(i+1)}) = \mathcal C_{-1}$.
		
		\item It follows that
			\begin{equation*}
			\mathcal C_0^n = \bigcorref_{i=0}^{n-1} \shift_{\phi}^{-i}(\mathcal C_0) = \bigcorref_{i=0}^{n-1} \mathcal C_i.
			\end{equation*}
		\item It follows that
			\begin{equation*}
			\mathcal C_0^{\pm n} = \bigcorref_{i=-n}^{n-1} \shift_{\phi}^{-i}(\mathcal C_0) = \bigcorref_{i=-n}^{n-1} \mathcal C_i.
			\qedhere
			\end{equation*}
	\end{enumerate}
\end{proof}

\subsection{Calculating the metric entropy}
\label{ssec:entropy_of_extended_shift}

We now calculate the metric entropy of $(\Shift_{m_-, m_+}, \shift_{\phi})$ and relate it to the entropy of the probability distributions $p^+$ and $p^-$. We start with the partitions $\mathcal C_0$ and $\mathcal C_{-1}$.

\begin{lemma}
\label{prop:entropy_basic_cilinder_partitions}
$\Enm(\mathcal C_0) = \sum_{s=0}^{m_+-1} -p^+_{s} \log p^+_{s}$
and
$\Enm(\mathcal C_{-1}) = \sum_{s=0}^{m_--1} -p^-_s \log p^-_s$.
\end{lemma}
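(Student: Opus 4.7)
The plan is to unfold the relevant definitions in sequence; there is no serious obstacle here, just bookkeeping to check that indices and measure values line up.

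First I would recall that, by definition, $\mathcal C_0 = \{C^s_0 : s \in S^+\}$. These sets are pairwise disjoint and their union is (up to measure zero) all of $\Sigma$, since every $x \in \Sigma$ has $x_0 \in S^+$. Next I would invoke \cref{def:extended_shift_measure}: because $0 \geq 0$, the measure of the cylinder $C^s_0$ is $p^+_s$. Plugging this into the entropy formula \eqref{eq:entropy_of_countable_partition} yields
\begin{equation*}
\Enm(\mathcal C_0) = \sum_{P \in \mathcal C_0} -\med(P)\log\med(P) = \sum_{s \in S^+} -p^+_s \log p^+_s,
\end{equation*}
which is the first identity.

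For the second identity, the argument is symmetric. By definition $\mathcal C_{-1} = \{C^s_{-1} : s \in S^-\}$, and again these sets form a partition of $\Sigma$ (every $x$ has $x_{-1} \in S^-$). Since the index $-1 < 0$, \cref{def:extended_shift_measure} gives $\med(C^s_{-1}) = p^-_s$, so applying \eqref{eq:entropy_of_countable_partition} once more produces
\begin{equation*}
\Enm(\mathcal C_{-1}) = \sum_{s \in S^-} -p^-_s \log p^-_s.
\end{equation*}

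The only step that warrants a moment of caution is confirming that $\mathcal C_0$ and $\mathcal C_{-1}$ really are partitions in the measure-theoretic sense used in the paper, i.e.\ that the cylinders are pairwise disjoint and exhaust $\Sigma$ up to a null set; this is immediate from the definition of $\Sigma$ as sequences with $x_i \in S^+$ for $i \geq 0$ and $x_i \in S^-$ for $i < 0$. Everything else is a direct substitution into the definitions, and no appeal to \cref{prop:extended_shift_iterated_cylinders} or the dynamics is needed for this particular lemma.
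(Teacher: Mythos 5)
Your proposal is correct and matches the paper's proof: both simply substitute the cylinder measures $\med(C^s_0)=p^+_s$ and $\med(C^s_{-1})=p^-_s$ from \cref{def:extended_shift_measure} into the entropy formula \eqref{eq:entropy_of_countable_partition}. Your extra remark verifying that $\mathcal C_0$ and $\mathcal C_{-1}$ are genuine partitions is a harmless (and reasonable) addition that the paper leaves implicit.
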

\begin{proof}
It follows directly from \cref{eq:entropy_of_countable_partition} from the simple calculations
	\begin{equation*}
	\Ens(\mathcal C_0) = \sum_{s=0}^{m_+-1} -\med(\Cyl{0}{s}) \log(\med(\Cyl{0}{s})) = \sum_{s=0}^{m_+-1} -p^+_{s} \log p^+_{s}.
	\end{equation*}
and
	\begin{equation*}
	\Ens(\mathcal C_{-1}) = \sum_{s=0}^{m_--1} -\med(\Cyl{-1}{s}) \log(\med(\Cyl{-1}{s})) = \sum_{s=0}^{m_--1} -p^-_s \log p^-_s.
	\qedhere
	\end{equation*}
\end{proof}

This shows, as could be expected, that the entropy of the partition $\mathcal C_0$ is related to $p^+$, the distribution of the positive part of the extended shift $\Shift_{m_-, m_+}$, while the entropy of the partition $\mathcal C_{-1}$ is related to $p^-$, the distribution of the negative part of $\Shift_{m_-,m_+}$. 

We can now calculate the metric entropy of a partition by cylinders other than the basic $\mathcal C_0$ and $\mathcal C_{-1}$.

\begin{lemma}
\label{prop:entropia.cilindros}
$\Enm(\mathcal C_{-n, \ldots,0, \ldots, n'-1}) = n \Enm(\mathcal C_{-1}) + n' \Enm(\mathcal C_0)$.
\end{lemma}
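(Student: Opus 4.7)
The plan is to expand the entropy sum directly and exploit the product (Bernoulli) structure of $\med$ on cylinder intersections. Each element of $\mathcal C_{-n, \ldots, n'-1} = \bigcorref_{i=-n}^{n'-1} \mathcal C_i$ is of the form $C_{-n, \ldots, n'-1}^{s_{-n}, \ldots, s_{n'-1}} = \bigcap_{i=-n}^{n'-1} C_i^{s_i}$, with $s_i \in S^-$ for $i < 0$ and $s_i \in S^+$ for $i \geq 0$. Since \Cref{def:extended_shift_measure} specifies $\med$ on single cylinders and the pushforward relation $p^- = \phi_* p^+$ is compatible with the shift, the natural extension of $\med$ to intersections of cylinders with distinct indices is the product
    \begin{equation*}
    \med\bigl(C_{-n, \ldots, n'-1}^{s_{-n}, \ldots, s_{n'-1}}\bigr) = \prod_{i=-n}^{-1} p^-_{s_i} \cdot \prod_{i=0}^{n'-1} p^+_{s_i},
    \end{equation*}
which I will take as the defining property for multi-index cylinders (this is the implicit Bernoulli structure of the construction, and can either be stated as a hypothesis or justified by a Carathéodory-type extension argument from the single-index case).

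Given this product formula, I would compute $\Enm(\mathcal C_{-n, \ldots, n'-1})$ directly from \cref{eq:entropy_of_countable_partition}. Taking $-\log$ of the product turns it into a sum, so
    \begin{equation*}
    \Enm(\mathcal C_{-n, \ldots, n'-1}) = \sum_{(s_{-n}, \ldots, s_{n'-1})} \Bigl(\prod_{j=-n}^{-1} p^-_{s_j} \prod_{j=0}^{n'-1} p^+_{s_j}\Bigr) \Bigl( \sum_{i=-n}^{-1} -\log p^-_{s_i} + \sum_{i=0}^{n'-1} -\log p^+_{s_i} \Bigr).
    \end{equation*}
For each fixed index $i \in \{-n, \ldots, -1\}$, summing the corresponding term over all $s_j$ with $j \neq i$ reduces, using $\sum_{s \in S^+} p^+_s = 1$ and $\sum_{s \in S^-} p^-_s = 1$, to $\sum_{s \in S^-} -p^-_s \log p^-_s = \Enm(\mathcal C_{-1})$ by \Cref{prop:entropy_basic_cilinder_partitions}. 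Analogously, each $i \in \{0, \ldots, n'-1\}$ contributes $\Enm(\mathcal C_0)$. Summing, I obtain $n \Enm(\mathcal C_{-1}) + n' \Enm(\mathcal C_0)$, as claimed.

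An equivalent inductive route I could follow instead: prove by induction on $n + n'$ that $\Enm(\mathcal C_{-n, \ldots, n'-1} \corref \mathcal C_{n'}) = \Enm(\mathcal C_{-n, \ldots, n'-1}) + \Enm(\mathcal C_0)$ and the analogous identity for extending on the negative side, using the standard decomposition $\Enm(\parti \corref \parti') = \Enm(\parti) + \Enm(\parti' \mid \parti)$ together with the independence of $\mathcal C_i$ from $\mathcal C_{-n, \ldots, n'-1}$ when $i \notin \{-n, \ldots, n'-1\}$ (again a consequence of the Bernoulli product structure). The base cases are \Cref{prop:entropy_basic_cilinder_partitions}.

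The only genuine difficulty is the product-measure justification: the paper defines $\med$ only on the basic cylinders $C_i^s$, so before either argument goes through I must make explicit that multi-index cylinder measures factor as products. Once this is in place, the calculation is a routine combinatorial manipulation of the entropy sum.
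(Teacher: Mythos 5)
Your proof is correct and follows essentially the same route as the paper's: both arguments rest on the independence (product structure) of the coordinate partitions $\mathcal C_i$, which the paper likewise asserts without derivation ($\med(C_{i,i'}^{s,s'}) = \med(C_{i}^{s})\med(C_{i'}^{s'})$), so the product formula you flag as needing justification is no more of a gap in your version than in the paper's. You merely unfold the standard additivity of entropy over independent partitions by direct computation, where the paper invokes that additivity and uses measure-preservation of $\shift$ to identify each $\Enm(\mathcal C_i)$ with $\Enm(\mathcal C_0)$ or $\Enm(\mathcal C_{-1})$.
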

\begin{proof}
For every $i \geq 1$, it holds that $\mathcal C_i = \shift_{\phi}^{-i}(\mathcal C_0)$ and $\shift_{\phi}^{-i}(\mathcal C_{-(i+1)}) = \mathcal C_{-1}$ (\cref{prop:extended_shift_iterated_partitions}). Since $\shift_{\phi}$ preserves the measure $\med$ (\cref{prop:extended_shift_preserves_measure}), it follows that $\Enm(\mathcal C_i) = \Enm(\mathcal C_0)$ and $\Enm(\mathcal C_{-(i+1)}) = \Enm(\mathcal C_{-1})$.

Besides that, for any integers $i < i'$, the partitions $\mathcal C_i$ and $\mathcal C_{i'}$ are independent, because $\Cyl{i}{s} \cap \Cyl{i'}{s'} = \Cyl{i, i'}{s, s'}$ and $\med(\Cyl{i,i'}{s,s'}) = \med(\Cyl{i}{s})\med(\Cyl{i'}{s'})$. Thus it follows that
	\begin{equation*}
	\Enm(\mathcal{C}_{-n, \ldots,0, \ldots, n'-1}) = \Enm\left( \bigcorref_{i=-n}^{n'-1} \mathcal C_i \right) = \sum_{i=-n}^{n'-1} \Enm(\mathcal C_i) = n \Enm(\mathcal C_{-1}) + n' \Enm(\mathcal C_0).
	\qedhere
	\end{equation*}
\end{proof}

In particular, since $\mathcal C_0^{n} = \mathcal C_{0, \ldots, n-1}$ (\cref{prop:extended_shift_iterated_partitions}), this implies that
	\begin{equation*}
	\enm(\shift_{\phi}, \mathcal C_0) = \lim_{n \to \infty} \frac{1}{n} \Enm(\mathcal C_0^{n}) = \lim_{n \to \infty} \frac{1}{n} n \Enm(\mathcal C_0) = \Enm(\mathcal C_0).
	\end{equation*}

To calculate the metric entropy of the system, we will use the Kolmogorov-Sinai theorem (\cref{prop:Kolmogorov-SinaiTheorem}). To that end we define a sequence of partitions.

\begin{definition}
$\parti_n := \mathcal C_0^{\pm n} = \mathcal C_{-n, \ldots, n-1}$.
\end{definition}

We will eventually need to use the metric entropy of $\parti_n^k$ (check \cref{def:dynamical_correfinement}), the $k$-th dynamical correfinement of the partition $\parti_n$, so the following \namecref{prop:correfinamento.cilindros} shows that it is just a partition by cylinders. The proof is trickier than would be expected.

\begin{lemma}
\label{prop:correfinamento.cilindros}
Let $n \geq 1$ and $k \geq 2n$. Then $\parti_n^k = \mathcal C_{-n, \ldots, n+k-2}$.
\end{lemma}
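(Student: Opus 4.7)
The plan is to expand $\parti_n^k$ via the definition of the dynamical correfinement, apply \cref{prop:extended_shift_iterated_cylinders} to each pullback, and show that the collection of partitions we obtain collapses to $\mathcal C_{-n,\ldots,n+k-2}$.

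Since $\parti_n = \mathcal C_{-n,\ldots,n-1} = \bigcorref_{i=-n}^{n-1} \mathcal C_i$, the definition of the $k$-th dynamical correfinement yields
\begin{equation*}
\parti_n^k \;=\; \bigcorref_{j=0}^{k-1} \shift^{-j}(\parti_n) \;=\; \bigcorref_{j=0}^{k-1} \bigcorref_{i=-n}^{n-1} \shift^{-j}(\mathcal C_i).
\end{equation*}
For each pair $(i,j)$ with $-n \leq i \leq n-1$ and $0 \leq j \leq k-1$, \cref{prop:extended_shift_iterated_cylinders} gives two cases. When $i \notin [-j,-1]$, we have $\shift^{-j}(\mathcal C_i) = \mathcal C_{i+j}$. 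When $i \in [-j,-1]$, so that $i<0$ but $i+j \geq 0$, we obtain the ``extended-cylinder'' partition $\set{C^{\phi\inv(s)}_{i+j}}{s \in S^-}$, whose atoms are unions of atoms of $\mathcal C_{i+j}$; in particular, this partition is strictly coarser than $\mathcal C_{i+j}$.

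The remaining step is a bookkeeping argument. I would verify that every $m \in \{-n,\ldots,n+k-2\}$ is realized as $i+j$ by some pair in the first case: the negative values $m = -n,\ldots,-1$ come from $(i,j)=(m,0)$, and each nonnegative value $m = 0,\ldots,n+k-2$ admits a decomposition $i+j=m$ with $0 \leq i \leq n-1$ and $0 \leq j \leq k-1$ (the hypothesis $k \geq 2n$ leaves ample room). Hence $\mathcal C_m$ appears as a factor of the correfinement for every $m$ in this range, and each factor of the second type, being coarser than $\mathcal C_{i+j}$ at the same nonnegative index, is absorbed and contributes no new atoms. Therefore
\begin{equation*}
\parti_n^k \;=\; \bigcorref_{m=-n}^{n+k-2} \mathcal C_m \;=\; \mathcal C_{-n,\ldots,n+k-2}.
\end{equation*}

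The main obstacle is precisely the ``crossing'' terms where the pullback of a negative-index partition lands at a nonnegative index: they look qualitatively different because the atoms are indexed by $S^-$ rather than $S^+$, and if one did not recognize them as coarsenings of the $\mathcal C_{i+j}$ already present in the correfinement, one would incorrectly conclude that $\parti_n^k$ carries extra structure reflecting the asymmetry between $S^-$ and $S^+$. Identifying this absorption is what lets the asymmetric nature of the zip shift disappear in the clean final formula.
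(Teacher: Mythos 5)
Your proof is correct, and it takes a genuinely different route from the paper's. You work at the level of whole partitions: using $\shift^{-j}(\parti\corref\parti') = \shift^{-j}(\parti)\corref\shift^{-j}(\parti')$ you flatten $\parti_n^k$ into a correfinement over all pairs $(i,j)$, then invoke \cref{prop:extended_shift_iterated_cylinders} to identify each factor $\shift^{-j}(\mathcal C_i)$ either as $\mathcal C_{i+j}$ exactly (when $i\notin[-j,-1]$) or as $\set{C^{\phi\inv(s)}_{i+j}}{s\in S^-}$, which is a coarsening of $\mathcal C_{i+j}$ because the fibers $\phi\inv(s)$ partition $S^+$; a coarser partition is absorbed (up to empty atoms) by a correfinement already containing the finer $\mathcal C_{i+j}$, and your index bookkeeping correctly shows that every $m\in\{-n,\ldots,n+k-2\}$ is realized by a first-case pair while second-case pairs only land in $\{0,\ldots,k-2\}$. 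The paper instead fixes a generic atom $\bigcap_{j}\shift^{-j}(C^j)$, regroups its constituent cylinders and extended cylinders by the index $l=i+j$, and derives the explicit compatibility relations among the symbols $s^j_i$ (\cref{tab:cylinder_symbols_relations}) needed for the intersection to be non-empty, concluding that each non-empty atom collapses to a single cylinder and that all cylinders of $\mathcal C_{-n,\ldots,n+k-2}$ arise. Your argument is shorter and avoids the symbol bookkeeping entirely; what the paper's atom-level computation buys is the precise description of which tuples $(C^0,\ldots,C^{k-1})$ intersect non-trivially, which your absorption argument discards. Note also that your proof never actually uses the hypothesis $k\geq 2n$ (any $m\leq n+k-2$ decomposes as $i+j$ with $0\leq i\leq n-1$, $0\leq j\leq k-1$ for every $k\geq 1$), so you in fact establish the identity in slightly greater generality; the restriction is harmless since the lemma is only applied in the limit $k\to\infty$.
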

\begin{proof}
The dynamical correfinement of $\parti_n$ is defined by $\parti_n^k = \bigcorref_{j=0}^{k-1} \shift_{\phi}^{-j}(\parti_n)$, so let us first calculate a generic element of the pullback partition
	\begin{align*}
	\shift_{\phi}^{-j}(\parti_n) = \set{\shift_{\phi}^{-j}(C)}{C \in \parti_n}.
	\end{align*}
Each cylinder of $\parti_n = \mathcal C_{-n, \ldots, n-1}$ has the form
	\begin{equation*}
	\Cyl{-n, \ldots, n-1}{s_{-n}, \ldots, s_{n-1}} = \bigcap_{i=-n}^{n-1} \Cyl{i}{s_i},
	\end{equation*}
with $s_i \in \{0, \ldots, m_--1\}$ if $i < 0$ and $s_i \in \{0, \ldots, m_+-1\}$ if $i \geq 0$. Then
	\begin{equation*}
	\shift_{\phi}^{-j}(\Cyl{-n, \ldots, n-1}{s_{-n}, \ldots, s_{n-1}}) = \shift_{\phi}^{-j} (\bigcap_{i=-n}^{n-1} \Cyl{i}{s_i}) = \bigcap_{i=-n}^{n-1} \shift_{\phi}^{-j} (\Cyl{i}{s_i}).
	\end{equation*}
Based on \cref{prop:extended_shift_iterated_cylinders}, we can separate this in $3$ intersections%
	\footnote{
		In order to simplify notation, we define that intersections that have the top index strictly smaller than the bottom index should be consider to be the whole space $\Shift_{m_-,m_+}$, so that they can be ignored. In \cref{eq:intersection_of_pulledback_cylinders_grouped}, this happens for the first intersection in the case $j > n-1$ (or equivalently $-(j+1) < -n$) and for the second intersection in the case $j=0$ (or equivalently $-1 < -j$).
	}
as follows:
	\begin{equation}
	\label{eq:intersection_of_pulledback_cylinders_grouped}
	\begin{split}
	\shift_{\phi}^{-j}(&\Cyl{-n, \ldots, n-1}{s_{-n}, \ldots, s_{n-1}}) = \\
        &= \bigcap_{i=-n}^{-(j+1)} \shift_{\phi}^{-j} (\Cyl{i}{s_i}) \cap \bigcap_{i=-j}^{-1} \shift_{\phi}^{-j}(\Cyl{-1}{s_{-1}}) \cap \bigcap_{i=0}^{n-1} \shift_{\phi}^{-j} (\Cyl{i}{s_i})  \\
		&= \bigcap_{i=-n}^{-(j+1)} \Cyl{i+j}{s_i} \cap \bigcap_{i=-j}^{-1} \Cyl{i+j}{\phi\inv(s_i)} \cap \bigcap_{i=0}^{n-1} \Cyl{i+j}{s_i}.
	\end{split}
	\end{equation}

Notice that in \cref{eq:intersection_of_pulledback_cylinders_grouped}, for $-n \leq i \leq -(j+1)$ and $0 \leq i \leq n-1$ we have basic cylinders of the form $\Cyl{i+j}{s_i}$ and, for $-j \leq i \leq -1$, we have extended cylinders (unions of cylinders) of the form
	\begin{equation*}
	\Cyl{i+j}{\phi\inv(s_i)} = \bigcup_{s \in \phi\inv(s_i)} \Cyl{i+j}{s}.
	\end{equation*}
This shows that $\shift_{\phi}^{-j}(\parti_n)$ is not a partition by cylinders (unless $\phi$ is bijective and hence the sets $\phi\inv(s^j_i)$ are singletons, but this is just a regular shift, not the usual case for extended shifts).

We must now calculate a generic element of $\parti_n^k = \bigcorref_{j=0}^{k-1} \shift_{\phi}^{-j}(\parti_n)$.
To that end, for each $0 \leq j \leq k-1$ we take cylinders $C^j \in \parti_n$, defined by
	\begin{equation*}
	C^j := \Cyl{-n, \ldots, n-1}{s^j_{-n}, \ldots, s^j_{n-1}} = \bigcap_{i=-n}^{n-1} \Cyl{i}{s^j_i}
	\end{equation*}
with $s^j_i \in \{0, \ldots, m_--1\}$ if $i < 0$ and $s^j_i \in \{0, \ldots, m_+-1\}$ if $i \geq 0$. An element of $\parti_n^k$ is a non-empty set of the form $\bigcap_{j=0}^{k-1} \shift_{\phi}^{-j}(C^j)$.
From \cref{eq:intersection_of_pulledback_cylinders_grouped}, it follows that this set is given by
	\begin{equation}
	\label{eq:lemma_generic_element_of_Pnk}
    \begin{split}
	\bigcap_{j=0}^{k-1} &\shift_{\phi}^{-j}(\Cyl{-n, \ldots, n-1}{s^j_{-n}, \ldots, s^j_{n-1}}) = \\
        &= \bigcap_{j=0}^{k-1} \bigcap_{i=-n}^{-(j+1)} \Cyl{i+j}{s^j_i} \cap \bigcap_{j=0}^{k-1} \bigcap_{i=-j}^{-1} \Cyl{i+j}{\phi\inv(s^j_i)} \cap \bigcap_{j=0}^{k-1} \bigcap_{i=0}^{n-1} \Cyl{i+j}{s^j_i}.
    \end{split}
	\end{equation}

This shows that a generic element of $\parti_n^k$ (as in \cref{eq:lemma_generic_element_of_Pnk}) is an intersection of basic cylinders and extended cylinders (which are unions of basic cylinders). These cylinders on the right-hand side of \cref{eq:lemma_generic_element_of_Pnk} are indexed by $l := i+j$, which varies between $-n$ and $n-k-2$ since $j$ varies between $0$ and $k-1$, and $i$ varies between $-n$ and $n-1$.

We wish to find conditions on the symbols $s^j_i$ that guarantee the intersections in \cref{eq:lemma_generic_element_of_Pnk} is non-empty. For that, we will reorganize the intersections based on the indices $l$ and $j$. Define $B_l$ to be the intersection of every cylinder and extended cylinder in \cref{eq:lemma_generic_element_of_Pnk} that has index $l$. Thus
	\begin{equation}
	\label{eq:lemma_generic_element_of_Pnk_by_index_l}
	\bigcap_{j=0}^{k-1}  \shift_{\phi}^{-j}(\Cyl{-n, \ldots, n-1}{s^j_{-n}, \ldots, s^j_{n-1}}) = \bigcap_{l=-n}^{n+k-2} B_l,
	\end{equation}
and each set $B_l$ is an intersection that depends on a range of values of $j$. 

Since the intersection of a cylinder or extended cylinder with another cylinder or extended cylinder is non-empty if they have different indices, the intersection on the right-hand side of \cref{eq:lemma_generic_element_of_Pnk_by_index_l} is non-empty if, and only if, each $B_l \neq \emptyset$. In what follows we shall determine the range of $j$ for each $l$ and find conditions on the symbols $s^j_i$. We separate our analysis in many cases.

\begin{enumerate}
\item ($-n \leq l \leq -1$) In this case $0 \leq j \leq l+n$ and no extended cylinder occurs. In order to have $B_l \neq \emptyset$, all the relations in \cref{tab:cylinder_symbols_relations} must be satisfied, and hence
		\begin{equation}
		\label{eq:lemma_Bl_case1}
		B_l = \bigcap_{j = 0}^{l+n} \Cyl{l}{s^{j}_{l-j}} = \Cyl{l}{s^{0}_{l}}.
		\end{equation}

\item ($0 \leq l \leq n-1$) In this case, when $0 \leq j \leq l$ we have basic cylinders and when $l+1 \leq j \leq l+n$ we have extended cylinders. In order to have $B_l \neq \emptyset$, all the relations in \cref{tab:cylinder_symbols_relations} must be satisfied, and hence
		\begin{equation}
		\label{eq:lemma_Bl_case2}
		B_l = \bigcap_{j = 0}^{l} \Cyl{l}{s^{j}_{l-j}} \cap \bigcap_{j = l+1}^{l+n} \Cyl{l}{\phi\inv(s^{j}_{l-j})} = \Cyl{l}{s^{l}_{0}}.
		\end{equation}

\item ($n \leq l \leq k-n-1$) In this case, when $l-n+1 \leq j \leq l$ we have basic cylinders and when $l+1 \leq j \leq l+n$ we have extended cylinders. In order to have $B_l \neq \emptyset$, all the relations in \cref{tab:cylinder_symbols_relations} must be satisfied, and hence
		\begin{equation}
		\label{eq:lemma_Bl_case3}
		B_l = \bigcap_{j = l-n+1}^{l} \Cyl{l}{s^{j}_{l-j}} \cap \bigcap_{j = l+1}^{l+n} \Cyl{l}{\phi\inv(s^{j}_{l-j})} = \Cyl{l}{s^{l}_{0}}.
		\end{equation}

\item ($k-n \leq l \leq k-2$) In this case, when $l-n+1 \leq j \leq l$ we have basic cylinders and when $l+1 \leq j \leq k-1$ we have extended cylinders. In order to have $B_l \neq \emptyset$, all the relations in \cref{tab:cylinder_symbols_relations} must be satisfied, and hence
		\begin{equation}
		\label{eq:lemma_Bl_case4}
		B_l = \bigcap_{j = l-n+1}^{l} \Cyl{l}{s^{j}_{l-j}} \cap \bigcap_{j = l+1}^{k-1} \Cyl{l}{\phi\inv(s^{j}_{l-j})} = \Cyl{l}{s^{l}_{0}}.
		\end{equation}

\item ($k-1 \leq l \leq k+n-2$) In this case $l-n+1 \leq j \leq k-1$ and no extended cylinder occurs. In order to have $B_l \neq \emptyset$, all the relations in \cref{tab:cylinder_symbols_relations} must be satisfied, and hence
		\begin{equation}
		\label{eq:lemma_Bl_case5}
		B_l = \bigcap_{j = l-n+1}^{k-1} \Cyl{l}{s^{j}_{l-j}} = \Cyl{l}{s^{k-1}_{l-k+1}}.
		\end{equation}
\end{enumerate}

\begin{table}
	\centering
	\begin{tabular}{c r c l}
	\toprule

	$l$			& \multicolumn{3}{c}{\textbf{Relations}} \\
	\midrule

	$-n$		& \textcolor{imecc-red}{$s^{0}_{-n}$}	&	& \\
	$-n+1$		& \textcolor{imecc-red}{$s^{0}_{-n+1}$} $= s^{1}_{-n}$	&	& \\
	$\vdots$	& $\vdots \quad$	&	& \\
	$-1$		& \textcolor{imecc-red}{$s^{0}_{-n}$} $= \cdots = s^{n-1}_{-n}$	&	& \\
	
	\cmidrule(lr){2-4}

	$0$			& \textcolor{imecc-red}{$s^{0}_{0}$} & $\in$ & $\phi\inv(s^{1}_{-1}) = \cdots = \phi\inv(s^{n}_{-n})$ \\
	$1$			& $s^{0}_{1} = $ \textcolor{imecc-red}{$s^{1}_{0}$} & $\in$ & $\phi\inv(s^{2}_{-1}) = \cdots = \phi\inv(s^{n+1}_{-n})$ \\
	$\vdots$	&	& $\vdots$ & \\
	$n-1$		& $s^{0}_{n-1} = \cdots =$ \textcolor{imecc-red}{$s^{n-1}_{0}$} & $\in$ & $\phi\inv(s^{n}_{-1}) = \cdots = \phi\inv(s^{2n-1}_{-n})$ \\
	$n$			& $s^{1}_{n-1} = \cdots =$ \textcolor{imecc-red}{$s^{n}_{0}$} & $\in$ & $\phi\inv(s^{n+1}_{-1}) = \cdots = \phi\inv(s^{2n}_{-n})$ \\
	$\vdots$	&	& $\vdots$ & \\
	$k-1-n$		& $s^{k-2n}_{n-1} = \cdots =$ \textcolor{imecc-red}{$s^{k-1-n}_{0}$} & $\in$ & $\phi\inv(s^{k-n}_{-1}) = \cdots = \phi\inv(s^{k-1}_{-n})$ \\
	$k-n$		& $s^{k-2n+1}_{n-1} = \cdots =$ \textcolor{imecc-red}{$s^{k-n}_{0}$} & $\in$ & $\phi\inv(s^{k-n+1}_{-1}) = \cdots = \phi\inv(s^{k-1}_{-(n-1)})$ \\
	$\vdots$	&	& $\vdots$ & \\
	$k-2$		& $s^{k-1-n}_{n-1} = \cdots =$ \textcolor{imecc-red}{$s^{k-2}_{0}$}	& $\in$	& $\phi\inv(s^{k-1}_{-1})$ \\

	\cmidrule(lr){2-4}
	
	$k-1$		& $s^{k-n}_{n-1} = \cdots =$ \textcolor{imecc-red}{$s^{k-1}_{0}$}	&	& \\
	$k$			& $s^{k-n+1}_{n-1} = \cdots =$ \textcolor{imecc-red}{$s^{k-1}_{1}$}	&	& \\
	$\vdots$	& $\vdots \quad$	&	& \\
	$n+k-2$		& \textcolor{imecc-red}{$s^{k-1}_{n-1}$}	&	& \\

	\bottomrule
\end{tabular}
	\caption[Relations between the symbols $s^{j}_{i} = s^{j}_{l-j}$ from \cref{eq:lemma_generic_element_of_Pnk}]{
		Relations between the symbols $s^{j}_{i} = s^{j}_{l-j}$ from \cref{eq:lemma_generic_element_of_Pnk} for $-n \leq l \leq n+k-2$. For each $l$, the symbol in \textcolor{imecc-red}{red} determines every other symbol in that line.
	}
	\label{tab:cylinder_symbols_relations}
\end{table}

Thus using \cref{eq:lemma_Bl_case1,eq:lemma_Bl_case2,eq:lemma_Bl_case3,eq:lemma_Bl_case4,eq:lemma_Bl_case5} on \cref{eq:lemma_generic_element_of_Pnk_by_index_l}, if follows that
	\begin{equation*}
	\bigcap_{j=0}^{k-1} \shift_{\phi}^{-j}(\Cyl{-n, \ldots, n-1}{s^j_{-n}, \ldots, s^j_{n-1}}) = \bigcap_{l=-n}^{-1} \Cyl{l}{s^0_l} \cap \bigcap_{l=0}^{k-2} \Cyl{l}{s^l_0} \cap \bigcap_{l=k-1}^{n-1+k-1} \Cyl{l}{s^{k-1}_l},
	\end{equation*}
that is, a generic element of $\parti_n^k$ is a cylinder of $\mathcal{C}_{-n, \ldots, n+k-2}$, and every such cylinder can be formed in this way because the symbols $s^0_{-n}, \ldots, s^0_0, \ldots, s^{k-1}_0, \ldots, s^{k-1}_{n-1}$ can be chosen arbitrarily, so we conclude that $\parti_n^k = \mathcal{C}_{-n, \ldots, n+k-2}$.
\end{proof}

It is now trivial to conclude the following last results.

\begin{lemma}
\label{prop:measure_entropy_of_Pn}
$\enm(\shift_{\phi}, \parti_n) = \Enm(\mathcal C_0)$.
\end{lemma}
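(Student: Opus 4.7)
The plan is to combine \cref{prop:correfinamento.cilindros} and \cref{prop:entropia.cilindros} to get an explicit closed form for $\Enm(\parti_n^k)$ valid for all sufficiently large $k$, and then pass to the limit in the defining expression
\begin{equation*}
\enm(\shift, \parti_n) = \lim_{k \to \infty} \frac{1}{k}\Enm(\parti_n^k).
\end{equation*}

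First, I would fix $n \geq 1$ and take $k \geq 2n$, so that \cref{prop:correfinamento.cilindros} applies to give the identity $\parti_n^k = \mathcal{C}_{-n, \ldots, n+k-2}$. Counting indices, this partition by cylinders has exactly $n$ strictly negative indices (from $-n$ to $-1$) and $n+k-1$ non-negative indices (from $0$ to $n+k-2$). Applying \cref{prop:entropia.cilindros} with these parameters yields
\begin{equation*}
\Enm(\parti_n^k) = n\,\Enm(\mathcal C_{-1}) + (n+k-1)\,\Enm(\mathcal C_0).
\end{equation*}

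Next, I would divide by $k$ and let $k \to \infty$. Since $n$ is fixed, the coefficient $n/k$ of $\Enm(\mathcal C_{-1})$ tends to $0$, while the coefficient $(n+k-1)/k$ of $\Enm(\mathcal C_0)$ tends to $1$. Both $\Enm(\mathcal C_{-1})$ and $\Enm(\mathcal C_0)$ are finite by \cref{prop:entropy_basic_cilinder_partitions} (since $S^-$ and $S^+$ are finite), so the limit exists and equals $\Enm(\mathcal C_0)$, concluding the argument.

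There is essentially no obstacle here: the heavy lifting has already been done in \cref{prop:correfinamento.cilindros} (identifying the dynamical correfinement of $\parti_n$ as a partition by cylinders) and \cref{prop:entropia.cilindros} (computing the entropy of such a partition additively in terms of $\Enm(\mathcal C_0)$ and $\Enm(\mathcal C_{-1})$). The only thing to be careful about is that \cref{prop:correfinamento.cilindros} requires $k \geq 2n$, but this poses no issue because the limit as $k \to \infty$ is insensitive to finitely many initial terms in the sequence $\frac{1}{k}\Enm(\parti_n^k)$.
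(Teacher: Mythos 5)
Your proposal is correct and follows essentially the same route as the paper: identify $\parti_n^k$ with $\mathcal C_{-n,\ldots,n+k-2}$ via \cref{prop:correfinamento.cilindros}, compute $\Enm(\parti_n^k) = n\Enm(\mathcal C_{-1}) + (n+k-1)\Enm(\mathcal C_0)$ via \cref{prop:entropia.cilindros}, and pass to the limit. Your explicit remark that the restriction $k \geq 2n$ is harmless for the limit is a small point of care the paper leaves implicit.
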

\begin{proof}
From \cref{prop:correfinamento.cilindros,prop:entropia.cilindros} it follows that
	\begin{equation*}
	\Enm(\parti_n^k) = \Enm(\mathcal C_{-n, \ldots, n-1+k-1}) = n \Enm(\mathcal C_{-1}) + (n+k-1) \Enm(\mathcal C_0),
	\end{equation*}
therefore
	\begin{equation*}
	\begin{split}
	\enm(\shift_{\phi}, \parti_n) &= \lim_{k \to \infty} \frac{1}{k} \Enm(\parti_n^k) \\
		&= \lim_{k \to \infty} \frac{1}{k} (n \Enm(\mathcal C_{-1}) + (n+k-1) \Enm(\mathcal C_0)) \\
		&= \Enm(\mathcal C_0).
		\qedhere
	\end{split}
	\end{equation*}
\end{proof}

\begin{proof}[Proof of {\cref{thm:measure_entropy_of_extended_shift}}]
\phantomsection
\addcontentsline{toc}{subsection}{Proof of \texorpdfstring{\cref{thm:measure_entropy_of_extended_shift}}{Theorem~A}}
The sequence of partitions $\parti_n = \mathcal C_{-n, \ldots, n-1}$ ($n \in \N$) is increasing relative to the refinement order:
	\begin{equation*}
	\parti_0 \preceq \parti_1 \preceq \cdots \preceq \parti_n \preceq \cdots.
	\end{equation*}
Besides that, the union of $\parti_n$ generates the $\sigma$-algebra of the space $\Shift_{m_-,m_+}$. Finally, the entropy of $\parti_n$ is finite, because the entropy of $\mathcal C_{-1}$ and $\mathcal C_0$ are finite. Therefore, by the Kolmogorov-Sinai theorem (\cref{prop:Kolmogorov-SinaiTheorem}), the metric entropy of the system is
	\begin{equation*}
	\enm(\shift_{\phi}) = \lim_{n \to \infty} \enm(\shift_{\phi}, \parti_n).
	\end{equation*}
We thus have to calculate $\enm(\shift_{\phi}, \parti_n)$, which is, by definition,
	\begin{equation*}
	 \enm(\shift_{\phi}, \parti_n) := \lim_{k \to \infty} \frac{1}{k} \Enm(\parti_n^k),
	\end{equation*}
which shows we have to calculate $\Enm(\parti_n^k)$.

This finally implies that
	\begin{equation*}
	\enm(\shift_{\phi}) = \lim_{n \to \infty} \enm(\shift_{\phi}, \parti_n) = \Enm(\mathcal C_0),
	\end{equation*}
which by \cref{prop:entropy_basic_cilinder_partitions} equals $\sum_{s=0}^{m_+-1} -p^+_{s} \log p^+_{s}$.
\end{proof}

\section{Folding entropy of extended shifts}
\label{sec:proof_folding_entropy_formula}

Let $(\Shift_{m_-,m_+}, \shift_{\phi})$ be an extended shift space. As a consequence of $\phi$ being surjective, we have that $m_+ \geq m_-$. When $m_+ > m_-$, the extended shift $\shift_{\phi}$ is not invertible and, for any given $x \in \Shift_{m_-, m_+}$, the set $\shift_{\phi}\inv(x)$ has more than one element. In the following discussion, we will need a way the refer to each element of $\shift_{\phi}\inv(x)$, so, for each $s \in \phi\inv(x_{-1})$, we define
\begin{equation}
	\label{eq:inverse_image_of_extended_sequence}
	\hat x(s) := (\ldots, x_{-2}; s, x_0, \ldots).
\end{equation}
We also denote
\begin{equation}
	\label{eq:inverse_imeage_extended_set}
	\hat x := \shift_{\phi}\inv(x) = \set{\hat x(s)}{s \in \phi\inv(x_{-1})}
\end{equation}	
and, for each $X \subseteq \Shift_{m_-,m_+}$,
\begin{equation*}
	\hat X := \set{\hat x}{x \in X} \subseteq \shift_{\phi}\inv(\epsilon).
\end{equation*}

From \cref{def:folding_entropy}, the folding entropy of $\shift_{\phi}$ is given by
\begin{equation*}
	\mathcal F(\shift_{\phi}) = \Enm(\epsilon \mid \shift_{\phi}\inv(\epsilon))
\end{equation*}
and, from \cref{eq:conditional_entropy_using_disintegration}, the conditional entropy of the atomic partition $\epsilon$ with respect to the dynamical pullback $\shift_{\phi}\inv(\epsilon) = \set{\hat x}{x \in \Shift_{m_-,m_+}}$ can be calculated by
    \begin{equation*}
	\Enm(\epsilon \mid \shift_{\phi}\inv(\epsilon)) = \int_{\hat x \in \shift_{\phi}\inv(\epsilon)} \Enm[\med_{\hat x}](\epsilon|_{\hat x}) \hat \med(\dd \hat x),
    \end{equation*}
in which $(\med_{\hat x})_{\hat x \in \shift_{\phi}\inv(\epsilon)}$ is the disintegration of $\med$ with respect to $\shift_{\phi}\inv(\epsilon)$ and $\hat\med$ is the quotient measure of $\shift_{\phi}\inv(\epsilon)$.

So in order to calculate the folding entropy of the extended shift, we need to find the quotient measure $\hat\med$ and to disintegrate the measure $\med$ with respect to the dynamical pullback $\shift_{\phi}\inv(\epsilon)$ of the atomic partition $\epsilon$ of $\Shift_{m_-,m_+}$ (defined in \cref{sec:conditional_entropy_and_folding_entropy}).

\subsection{The quotient measure}
\label{ssec:quotient_measure}

Let us denote the natural projection with respect to the partition $\shift_{\phi}\inv(\epsilon)$ by $\fun{\proj}{\Shift_{m_-,m_+}}{\shift_{\phi}\inv(\epsilon)}$.
Let us first determine the quotient $\sigma$-algebra $\hat{\mathcal{B}}$, which is the pushforward of the cylinders $\sigma$-algebra of $\Shift_{m_-,m_+}$ by the natural projection $\proj$.

\begin{proposition}
	\label{prop:extended_shift_pullback_projected_set}
	For every set $X \subseteq \Shift_{m_-,m_+}$,
	\begin{equation*}
		\proj\inv(\hat X) = \shift_{\phi}\inv(X).
	\end{equation*}
	Besides that, the quotient $\sigma$-algebra $\hat{\mathcal{B}}$ is generated by the projected cylinder sets $\hat C$ ($C \in \mathcal{B}$ is a cylinder).
\end{proposition}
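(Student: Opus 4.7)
\emph{Proof plan.}

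For the first identity, the plan is to unwind the definition of $\proj$. Each $y \in \Shift$ projects to the unique element of $\shift\inv(\epsilon)$ containing it, namely $\proj(y) = \shift\inv(\shift(y))$, so setting $z := \shift(y)$ gives $\proj(y) = \hat z$. Since $x \mapsto \hat x$ is a bijection between $\Shift$ and $\shift\inv(\epsilon)$ (one recovers $x$ from $\hat x$ by applying $\shift$ to any of its elements), $\hat z \in \hat X$ holds if and only if $z \in X$, yielding
\begin{equation*}
\proj\inv(\hat X) = \set{y \in \Shift}{\shift(y) \in X} = \shift\inv(X).
\end{equation*}

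For the generation statement, I would let $\hat{\mathcal A}$ denote the $\sigma$-algebra on $\shift\inv(\epsilon)$ generated by the projected cylinders $\hat C$. The inclusion $\hat{\mathcal A} \subseteq \hat{\mathcal B}$ is immediate: by the first part together with \Cref{prop:extended_shift_iterated_cylinders}, $\proj\inv(\hat C) = \shift\inv(C)$ is a (possibly extended) cylinder, hence lies in $\mathcal B$, so $\hat C \in \hat{\mathcal B}$.

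The reverse inclusion $\hat{\mathcal B} \subseteq \hat{\mathcal A}$ is the main obstacle. My plan is to fix $\mathcal Q \in \hat{\mathcal B}$, use the bijection above to write $\mathcal Q = \hat X$ for a unique $X \subseteq \Shift$, and then argue that $X \in \mathcal B$. Since $\hat{\cdot}$ commutes with Boolean operations and carries cylinders to projected cylinders, the map $Y \mapsto \hat Y$ will be a $\sigma$-algebra isomorphism $\mathcal B \to \hat{\mathcal A}$, so once $X \in \mathcal B$ is established we conclude $\mathcal Q = \hat X \in \hat{\mathcal A}$. The delicate point is that a priori one only knows $\shift\inv(X) = \proj\inv(\mathcal Q) \in \mathcal B$, and images under the non-invertible map $\shift$ need not be measurable in general. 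The idea to sidestep this is to exploit the local structure of the zip shift: for each $s \in S^+$, the restriction $\shift|_{C_0^s} \colon C_0^s \to C_{-1}^{\phi(s)}$ is a bimeasurable bijection, with injectivity immediate from $\shift(x)_i = x_{i+1}$ for $i \neq -1$ and the inverse simply shifting indices back while reinserting $s$ in position $0$. Using the finite partition $\Shift = \bigsqcup_{s \in S^+} C_0^s$ together with the surjectivity of $\shift$,
\begin{equation*}
X = \shift(\shift\inv(X)) = \bigcup_{s \in S^+} \shift\bigl(\shift\inv(X) \cap C_0^s\bigr)
\end{equation*}
will be a finite union of measurable sets, hence in $\mathcal B$, closing the argument.
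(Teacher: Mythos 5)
Your proof of the first identity is correct and matches the paper's, which simply writes $\proj\inv(\hat X) = \bigcup \hat X = \bigcup\set{\shift\inv(x)}{x \in X} = \shift\inv(X)$; your pointwise formulation via $\proj(y) = \widehat{\shift(y)}$ and the injectivity of $x \mapsto \hat x$ is an equivalent reading of the same fact. For the generation claim the two arguments genuinely diverge. The paper argues in effect in one direction only: every $\mathcal Q \in \hat{\mathcal{B}}$ is of the form $\hat X$ with $\proj\inv(\mathcal Q) = \shift\inv(X) \in \mathcal{B}$, and from this it directly asserts that $\hat{\mathcal{B}}$ is generated by the projected cylinders. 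You correctly identify that this leaves something to be checked --- knowing $\shift\inv(X) \in \mathcal{B}$ does not by itself place $\hat X$ in the $\sigma$-algebra generated by the sets $\hat C$ --- and you close the gap with an ingredient the paper never invokes: the restrictions $\shift|_{C_0^s} \colon C_0^s \to C_{-1}^{\phi(s)}$ are bimeasurable bijections, so that $X = \bigcup_{s \in S^+} \shift\bigl(\shift\inv(X) \cap C_0^s\bigr)$ is measurable, after which the bijection-induced map $Y \mapsto \hat Y$ carries $\mathcal{B}$ onto the $\sigma$-algebra generated by the projected cylinders. Your two-inclusion argument is therefore longer but strictly more complete; the paper's version buys brevity at the cost of leaving the implication $\shift\inv(X) \in \mathcal{B} \Rightarrow X \in \mathcal{B}$ (equivalently, the measurability of forward images under the non-invertible $\shift$) unjustified, which is precisely the point where local injectivity of the zip shift on the cylinders $C_0^s$ is needed.
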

\begin{proof}
	The first claim follows directly from
	\begin{equation*}
		\proj\inv (\hat X) = \bigcup \hat X = \bigcup \set{\shift_{\phi}\inv(x)}{x \in X} = \shift_{\phi}\inv(X).
	\end{equation*}
	
	Now that $\mathcal{Q} \subseteq \shift_{\phi}\inv(\epsilon)$. Since each element of $\shift_{\phi}\inv(\epsilon)$ is of the form $\hat x$ for some $x \in \Shift_{m_-, m_+}$, there exists a set $X \subseteq \Shift_{m_-, m_+}$ such that $\mathcal{Q} = \set{\hat x}{x \in X} = \hat X$. This implies that its inverse image by the projection is of the form $\proj\inv(\mathcal{Q}) = \proj\inv(\hat X) = \shift_{\phi}\inv(X)$. This shows that $\hat{\mathcal{B}}$ is generated by sets $\hat C$ such that  $\shift_{\phi}\inv(C) \in \mathcal{B}$ is a cylinder, which means that $C$ is also a cylinder.
\end{proof}

Since $\hat x = \shift_{\phi}\inv(x)$, it may be confusing to understand the difference between the sets $\hat C$ and $\shift_{\phi}\inv(C)$. To better understand the notation, it is worth noticing that, if $x \in C$, then $\hat x = \shift_{\phi}\inv(x) \subseteq \shift_{\phi}\inv(C)$; that is, for each $s \in \phi\in(x_{-1})$, we have $\hat x(s) \in \shift_{\phi}\inv(C)$. This shows that the elements of the set $\hat x$ (which is an element of $\hat C$) do not belong to the set $\hat C$, but instead to $\shift_{\phi}\inv(C)$. To further avoid confusion, consider this example. Suppose $x, y \in \Shift_{m_-, m_+}$, $\hat x = \{\hat x(0), \hat x(1)\}$ and $\hat y = \{\hat y(0), \hat y(1)\}$. If $C = \{x, y\}$, then
\begin{equation*}
	\hat C = \{\hat x, \hat y\} = \{\{\hat x(0), \hat x(1)\}, \{\hat y(0), \hat y(1)\}\},
\end{equation*}
while $\shift_{\phi}\inv(C) = \{\hat x(0), \hat x(1), \hat y(0), \hat y(1)\}$.

In particular, it is worth noting that, for a cylinder $\Cyl{i}{s}$,
\begin{equation*}
	\proj\inv(\hatCyl{i}{s}) = \shift_{\phi}\inv(\Cyl{i}{s}) =
	\begin{cases}
		\Cyl{i+1}{s} & i \neq -1 \\
		\bigcup_{s' \in \phi\inv(s)} \Cyl{0}{s'} & i = -1.
	\end{cases}
\end{equation*}

The quotient measure $\hat \med := \proj\emp\med$ on $\shift_{\phi}\inv(\epsilon)$ is the pushforward of $\med$ by the natural projection $\fun{\proj}{\Shift_{m_-, m_+}}{\shift_{\phi}\inv(\epsilon)}$ of the dynamical pullback of the atomic partition. The next proposition shows how we can easily calculate it using the original measure $\med$.

\begin{proposition}
	\label{prop:extended_shift_quotient_measure}
	Let $(\Shift_{m_-,m_+}, \shift_{\phi})$ be an extended shift space. For every measurable set $M \subseteq \Shift_{m_-, m_+}$,
	\begin{equation*}
		\hat\med(\hat M) = \med(M).
	\end{equation*}
\end{proposition}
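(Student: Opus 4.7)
The plan is to chain together three facts that are already available: the definition of the quotient measure as a pushforward, the identity $\proj\inv(\hat X) = \shift\inv(X)$ from \cref{prop:extended_shift_pullback_projected_set}, and the fact that $\shift$ preserves $\med$ from \cref{prop:extended_shift_preserves_measure}.

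First I would note that the statement needs $\hat M$ to lie in the quotient $\sigma$-algebra $\hat{\mathcal B}$, so I would open with a measurability remark: by \cref{prop:extended_shift_pullback_projected_set}, $\proj\inv(\hat M) = \shift\inv(M)$, and since $\shift$ is measurable, $\shift\inv(M) \in \mathcal B$, whence $\hat M \in \hat{\mathcal B}$ by the very definition of the pushforward $\sigma$-algebra given in the Preliminaries.

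Then the computation is a direct chain:
\begin{equation*}
\hat\med(\hat M) \;=\; \proj\emp\med(\hat M) \;=\; \med\bigl(\proj\inv(\hat M)\bigr) \;=\; \med\bigl(\shift\inv(M)\bigr) \;=\; \med(M),
\end{equation*}
where the first equality is the definition of $\hat\med$ as the pushforward of $\med$ by $\proj$, the second is the definition of a pushforward measure, the third applies \cref{prop:extended_shift_pullback_projected_set} (with $X = M$), and the fourth uses that $\shift$ preserves $\med$ (\cref{prop:extended_shift_preserves_measure}).

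There is essentially no obstacle here; the content of the proposition is really packed into the previous two results, and the only point deserving a sentence of care is checking that $\hat M$ belongs to $\hat{\mathcal B}$ so that $\hat\med(\hat M)$ is even defined. I would keep the proof short and purely notational, emphasizing that the measure-preserving property of $\shift$ is what makes the quotient measure particularly clean on projected sets.
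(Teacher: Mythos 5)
Your proof is correct and follows exactly the same route as the paper's: the chain $\hat\med(\hat M) = \med(\proj\inv(\hat M)) = \med(\shift\inv(M)) = \med(M)$ using \cref{prop:extended_shift_pullback_projected_set} and \cref{prop:extended_shift_preserves_measure}. The only difference is your added remark verifying $\hat M \in \hat{\mathcal B}$, which the paper leaves implicit but is a reasonable point of care.
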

\begin{proof}
	Since $\proj\inv(\hat M) = \shift_{\phi}\inv(M)$ (\cref{prop:extended_shift_pullback_projected_set}) and $\shift_{\phi}$ is measure-preserving (\cref{prop:extended_shift_preserves_measure}), it follows that
	\begin{equation*}
		\hat \med(\hat M) = \med(\proj\inv(\hat M)) = \med(\shift_{\phi}\inv(M)) = \med(M).
		\qedhere
	\end{equation*}
\end{proof}

\subsection{Disintegration and calculating the folding entropy}
\label{ssec:disintegration}

We wish to disintegrate the measure $\med$ on $\Shift_{m_-,m_+}$ with respect to the pullback partition $\shift_{\phi}\inv(\epsilon)$. In order to do that, we must find, for each $\hat x \in \shift_{\phi}\inv(\epsilon)$, the conditional measures $\med_{\hat x}$ on $\Shift_{m_-,m_+}$, in such a way that, for every measurable set $M \in \mathcal B$, it holds that
\begin{equation*}
	\med(M) = \int_{\hat x \in \shift_{\phi}\inv(\epsilon)} \med_{\hat x}(M) \hat \med (\dd \hat x).
\end{equation*}

To define the conditional measures on $\hat x$, remember that $\hat x = \set{\hat x(s)}{s \in \phi\inv(x_{-1})}$ and that the conditional measure is supported on $\hat x$, so, for each measurable set $M \in \mathcal B$, it is given by $\med_{\hat x}(M) = \med_{\hat x}(M \cap \hat x)$. Thus, since $\hat x$ is finite, we can define it on each atom $\{\hat x(s)\}$.

Based on the probability distribution $p^+$ on $\{0, \ldots, m_+-1\}$, we have described how to induce a probability distribution $p^-$ on $\{0, \ldots, m_--1\}$ by taking the pushforward of $p^+$ by the transition function $\phi$. Using the two measures $p^+$ on $\{0, \ldots, m_+-1\}$ and $p^-$ on $\{0, \ldots, m_--1\}$, we can define, for each $s_- \in \{0, \ldots, m_--1\}$, a new probability measure $q^{s_-}$ on the inverse image set $\phi\inv(s_-)$ by setting, for each $s_+ \in \phi\inv(s_-)$
    \begin{equation*}
	q^{s_-}_{s_+} := \frac{p^+_{s_+}}{p^-_{s_-}}.
    \end{equation*}
This is a probability measure because, for each $s_- \in \{0, \ldots, m_--1\}$,
    \begin{equation*}
	\sum_{s_+ \in \phi\inv(s_-)} q^{s_-}_{s_+} = \sum_{s_+ \in \phi\inv(s_-)} \frac{p^+_{s_+}}{p^-_{s_-}} = \frac{\sum_{s_+ \in \phi\inv(s_-)}  p^+_{s_+}}{p^-_{s_-}} =  1.
    \end{equation*}
It is important to notice that, as a direct consequence of this definition,
\begin{equation}
\label{eq:positive_distribution_is_negative_distribution_times_conditional_distribution}
	p^+ = (p^+_{s_+})_{s_+ = 0}^{m_+-1} = ((p^-_{s_-} q^{s_-}_{s_+})_{s_+ \in \phi\inv(s_-)})_{s_- = 0}^{m_--1}.
\end{equation}

We use these measures $q^{s_-}$ to define the conditional measures as follows, by identifying the set $\hat x$ with the preimage $\phi\inv(x_{-1})$.

\begin{definition}
	\label{def:extended_shift_conditional_measure}
	Let $(\Shift_{m_-,m_+}, \shift_{\phi})$ be an extended shift space with measure $\med$ given by the probability distribution $p^+$, and let $\hat x \in \shift_{\phi}\inv(\epsilon)$. The \emph{conditional measure $\med_{\hat x}$ on $\hat x$} is the probability measure defined, for each $s \in \phi\inv(x_{-1})$, by
	\begin{equation*}
		\med_{\hat x} (\{\hat x(s)\}) := q^{x_{-1}}_{s} = \frac{p^+_{s}}{p^-_{x_{-1}}}.
	\end{equation*} 
\end{definition}

Now we show this is the disintegration of $\med$.

\begin{proposition}
	\label{prop:extended_shift_disintegration}
	Let $(\Shift_{m_-,m_+}, \shift_{\phi})$ be an extended shift space. The family $\{\med_{\hat x}\}_{\hat x \in \shift_{\phi}\inv(\epsilon)}$ is the disintegration of $\med$ with respect to $\shift_{\phi}\inv(\epsilon)$.
\end{proposition}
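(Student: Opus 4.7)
The plan is to verify the three defining properties of a disintegration for the family $(\med_{\hat x})_{\hat x \in \shift\inv(\epsilon)}$. Property (1), that $\med_{\hat x}(\hat x) = 1$, is immediate from \cref{def:extended_shift_conditional_measure}, since the total mass $\sum_{s \in \phi\inv(x_{-1})} q^{x_{-1}}_s$ was already shown to equal $1$ when the family $q^{s^-}$ was introduced. Property (2), the measurability of $\hat x \mapsto \med_{\hat x}(M)$ for $M \in \mathcal B$, reduces via the definition of the quotient $\sigma$-algebra $\hat{\mathcal B}$ to checking that $y \mapsto \med_{\widehat{\shift(y)}}(M)$ is $\mathcal B$-measurable; for cylinders $M$ this function depends on only finitely many coordinates of $y$, and a monotone class argument extends measurability to all of $\mathcal B$.

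The substantive content is property (3). First I will use \cref{prop:extended_shift_quotient_measure} together with the identity $\proj = \widehat{\cdot} \circ \shift$ and the fact that $\shift$ preserves $\med$ (\cref{prop:extended_shift_preserves_measure}) to deduce $\hat\med = (\widehat{\cdot})_\emp \med$, and hence rewrite
\begin{equation*}
\int_{\hat y \in \shift\inv(\epsilon)} \med_{\hat y}(M) \, \hat\med(\dd \hat y) = \int_{y \in \Shift} \med_{\widehat{y}}(M) \, \med(\dd y).
\end{equation*}
By Dynkin's $\pi$--$\lambda$ theorem, it then suffices to verify $\med(M) = \int_\Shift \med_{\widehat{y}}(M) \, \med(\dd y)$ on the $\pi$-system of multi-indexed cylinders $M = C^{s_n, \ldots, s_{n'}}_{n, \ldots, n'}$ generating $\mathcal B$.

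To carry out this verification, the main tool is the explicit coordinate description of $\hat y(s)$ obtained from \eqref{eq:inverse_image_of_extended_sequence}: $\hat y(s)_0 = s$ while $\hat y(s)_j = y_{j-1}$ for every $j \neq 0$. The analysis splits into two cases according to whether $0 \in [n,n']$. When $0 \notin [n,n']$, the condition $\hat y(s) \in M$ is independent of $s$, so $\med_{\hat y}(M)$ collapses to the characteristic function of $D := \bigcap_{j = n}^{n'} C^{s_j}_{j-1}$; since $j-1$ and $j$ share the same sign throughout, each factor $\med(C^{s_j}_{j-1})$ agrees with $\med(C^{s_j}_{j})$ and the integral reduces to $\med(M)$. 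When $0 \in [n,n']$, the condition forces $s = s_0$ and $y_{-1} = \phi(s_0)$, giving
\begin{equation*}
\int_\Shift \med_{\widehat{y}}(M) \, \med(\dd y) = q^{\phi(s_0)}_{s_0} \cdot \med\left( C^{\phi(s_0)}_{-1} \cap \bigcap_{\substack{j=n \\ j \neq 0}}^{n'} C^{s_j}_{j-1} \right);
\end{equation*}
applying the key identity $p^+_{s_0} = p^-_{\phi(s_0)} q^{\phi(s_0)}_{s_0}$ from \eqref{eq:positive_distribution_is_negative_distribution_times_conditional_distribution} together with the independence of cylinders at distinct indices (as exploited in the proof of \cref{prop:entropia.cilindros}) collapses the right-hand side to $\prod_{j = n}^{n'} \med(C^{s_j}_j) = \med(M)$.

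The main obstacle is the coordinate-level bookkeeping around the index shift $\hat y(s)_j = y_{j-1}$, especially at $j = 0$ where the symbol type changes from $S^-$ to $S^+$; the cancellation $p^+_{s_0} = p^-_{\phi(s_0)} q^{\phi(s_0)}_{s_0}$ is precisely what glues the $j = 0$ factor of $\med(M)$ (namely $p^+_{s_0}$) to the factor $\med(C^{\phi(s_0)}_{-1}) = p^-_{\phi(s_0)}$ that the integration conjures from the conditional measure, so the whole verification hinges on this identity.
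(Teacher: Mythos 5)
Your proposal is correct and follows essentially the same route as the paper: verify the disintegration identity on cylinders by splitting into the cases where the coordinate $0$ is or is not constrained, with the cancellation $q^{\phi(s_0)}_{s_0}\, p^-_{\phi(s_0)} = p^+_{s_0}$ doing the real work. If anything, yours is the more complete version of that argument: the paper verifies property (3) only on single-index cylinders $C^s_i$ (which do not form a $\pi$-system) and says nothing about properties (1) and (2), whereas you check multi-index cylinders and invoke Dynkin's theorem, which is what the reduction to a generating family actually requires.
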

\begin{proof}
	It suffices to show that, for each basic cylinder $\Cyl{s}{i}$, it holds that
	\begin{equation*}
		\med(\Cyl{s}{i}) = \int_{\hat x \in \shift_{\phi}\inv(\epsilon)} \med_{\hat x}(\Cyl{s}{i} \cap \hat x) \hat \med (\dd \hat x).
	\end{equation*}
	
	First let us calculate the sets $\Cyl{s}{i} \cap \hat x$. For any set $C \subseteq \Shift_{m_-, m_+}$, it holds that $x \in C$ if, and only if, $\hat x \subseteq \shift_{\phi}\inv(C)$. Because of this, we must consider the cases $\hat x \in \widehat{\shift_{\phi}(\Cyl{s}{i})}$ and $\hat x \notin \widehat{\shift_{\phi}(\Cyl{s}{i})}$; or equivalently, $x \in \shift_{\phi}(\Cyl{s}{i})$ and $x \notin \shift_{\phi}(\Cyl{s}{i})$. According to \cref{prop:extended_shift_iterated_cylinders}, the expression for $\shift_{\phi}(\Cyl{s}{i})$ depends on the value for $i$, so we consider $2$ scenarios:
	
	\begin{itemize}
		\item ($i \neq 0$) In this case, we have $\shift_{\phi}(\Cyl{s}{i}) = \Cyl{s}{i-1}$, hence
		\begin{equation*}
			\Cyl{s}{i} \cap \hat x =
			\begin{cases}
				\hat x		& \hat x \in \hatCyl{s}{i-1} \\
				\emptyset	& \hat x \notin \hatCyl{s}{i-1}.
			\end{cases}
		\end{equation*}
		Since $\med_{\hat x}(\hat x) = 1$ e $\med_{\hat x}(\emptyset) = 0$, it follows that
		\begin{align*}
			\med(\Cyl{i}{s}) &= \med(\Cyl{i-1}{s}) \\
			&= \hat \med(\hatCyl{i-1}{s}) \\
			&= \int_{\hat x \in \hatCyl{i-1}{s}} 1 \hat \med (\dd \hat x) + \int_{\hat x \in \shift_{\phi}\inv(\epsilon) \setminus \hatCyl{i-1}{s}} 0 \hat \med (\dd \hat x) \\
			&= \int_{\hat x \in \hatCyl{i-1}{s}} \med_{\hat x}(\hat x) \hat \med (\dd \hat x) + \int_{\hat x \in \shift_{\phi}\inv(\epsilon) \setminus \hatCyl{i-1}{s}} \med_{\hat x}(\emptyset) \hat \med (\dd \hat x) \\
			&= \int_{\hat x \in \shift_{\phi}\inv(\epsilon)} \med_{\hat x}(\Cyl{i}{s} \cap \hat x) \hat \med (\dd \hat x).
		\end{align*}
		
		\item ($i = 0$) In this case, we have that
		\begin{equation*}
			\Cyl{0}{s} \cap \hat x =
			\begin{cases}
				\{\hat x(s)\}	& \hat x \in \hatCyl{-1}{\phi(s)} \\
				\emptyset		& \hat x \notin \hatCyl{-1}{\phi(s)}.
			\end{cases}
		\end{equation*}
		Since $\med_{\hat x}(\{\hat x(s)\}) = q^{x_{-1}}_{s}$ and $\med_{\hat x}(\emptyset) = 0$ (and, for each $x \in \Cyl{-1}{\phi(s)}$, it holds that $x_{-1} = \phi(s)$), it follows that
		\begin{align*}
			\med(\Cyl{0}{s}) &= q^{\phi(s)}_{s} \med(\Cyl{-1}{\phi(s)}) \\
			&= q^{\phi(s)}_{s} \hat \med(\hatCyl{-1}{\phi(s)}) \\
			&= \int_{\hat x \in \hatCyl{-1}{\phi(s)}} q^{x_{-1}}_{s} \hat \med (\dd \hat x) + \int_{\hat x \in \shift_{\phi}\inv(\epsilon) \setminus \hatCyl{-1}{\phi(s)}} 0 \hat \med (\dd \hat x) \\
			&= \int_{\hat x \in \hatCyl{-1}{\phi(s)}} \med_{\hat x}(\Cyl{0}{s} \cap \hat x) \hat \med (\dd \hat x) + \int_{\hat x \in \shift_{\phi}\inv(\epsilon) \setminus \hatCyl{-1}{\phi(s)}} \med_{\hat x}(\Cyl{0}{s} \cap \hat x) \hat \med (\dd \hat x) \\
			&= \int_{\hat x \in \shift_{\phi}\inv(\epsilon)} \med_{\hat x}(\Cyl{0}{s} \cap \hat x) \hat \med (\dd \hat x).
			\qedhere
		\end{align*}
	\end{itemize}
\end{proof}

We are finally ready to prove our main result on the folding entropy.

\begin{proof}[Proof of {\cref{thm:folding_entropy_formula}}]
\phantomsection
\addcontentsline{toc}{subsection}{Proof of \texorpdfstring{\cref{thm:folding_entropy_formula}}{Theorem~B}}
	As discussed in the beginning of the section, it follows from \cref{def:folding_entropy} and \cref{eq:conditional_entropy_using_disintegration} that the folding entropy of $\shift_{\phi}$ is given by
	\begin{equation*}
		\mathcal F(\shift_{\phi}) = \int_{\hat x \in \shift_{\phi}\inv(\epsilon)} \Enm[\med_{\hat x}](\epsilon|_{\hat x}) \hat \med(\dd \hat x),
	\end{equation*}
	in which $\hat \med = \med_{\shift_{\phi}\inv(\epsilon)}$ is the quotient measure of $\shift_{\phi}\inv(\epsilon)$.
	
	Now notice that
	\begin{equation*}
		\epsilon|_{\hat x} = \set{\{y\} \cap \hat x}{\{y\} \in \epsilon} = \set{\{\hat x(s_+)\}}{s_+ \in \phi\inv(x_{-1})},
	\end{equation*}
	hence from \cref{eq:entropy_of_countable_partition} and \cref{def:extended_shift_conditional_measure} it follows that
	\begin{equation*}
		\Enm[\med_{\hat x}](\epsilon|_{\hat x}) = \smashoperator{\sum_{s_+ \in \phi\inv(x_{-1})}} -\med_{\hat x} (\{\hat x(s_+)\}) \log \med_{\hat x} (\{\hat x(s_+)\}) = \smashoperator{\sum_{s_+ \in \phi\inv(x_{-1})}} -q^{x_{-1}}_{s_+} \log q^{x_{-1}}_{s_+}.
	\end{equation*}
	This shows that this value depends only on $x_{-1}$, so it is constant on each set $\hatCyl{-1}{s_-}$.
	The set
	\begin{equation*}
		\hat {\mathcal C}_{-1} := \set{\hatCyl{-1}{s_-}}{s_- \in \{0, \ldots, m_--1\}}
	\end{equation*}
	is a partition of $\shift_{\phi}\inv(\epsilon)$, since (1) $\hatCyl{-1}{s_-} \neq \emptyset$; (2) $\hatCyl{-1}{s_-} \cap \hatCyl{-1}{r^-} = \emptyset$ when $s_- \neq r^-$; and (3) $\shift_{\phi}\inv(\epsilon) =\allowbreak \bigcup_{s_- = 0}^{m_--1} \hatCyl{-1}{s_-}$.
	
	Besides that, it follows from \cref{prop:extended_shift_quotient_measure} and \cref{def:extended_shift_measure} that $\hat\med (\hatCyl{-1}{s_-}) = \med (\Cyl{-1}{s_-}) = p^-_{s_-}$. Thus the folding entropy of $\shift_{\phi}$ is
	\begin{equation*}
		\begin{split}
			\mathcal F(\shift_{\phi})
			&= \int_{\hat x \in \shift_{\phi}\inv(\epsilon)} \Enm[\med_{\hat x}](\epsilon|_{\hat x}) \hat \med (\dd \hat x) \\
			&= \sum_{s_- = 0}^{m_--1} \int_{\hat x \in \hatCyl{-1}{s_-}} \Enm[\med_{\hat x}](\epsilon|_{\hat x}) \hat\med (\dd \hat x) \\
			&= \sum_{s_- = 0}^{m_--1} \Big( \smashoperator[r]{\sum_{s_+ \in \phi\inv(s_-)}} -q^{s_-}_{s_+} \log q^{s_-}_{s_+} \Big) \hat\med (\hatCyl{-1}{s_-}) \\
			&= \sum_{s_- = 0}^{m_--1} \Big(\smashoperator[r]{\sum_{s_+ \in \phi\inv(s_-)}} -q^{s_-}_{s_+} \log q^{s_-}_{s_+} \Big) p^-_{s_-}.
		\end{split}
	\end{equation*}
	
	Noting that $q^{s_-}_{s_+} p^-_{s_-} = p^+_{s_+}$ (\cref{def:extended_shift_conditional_measure}) and $p^-_{s_-} = \sum_{s_+ \in \phi\inv(s_-)} p^+_{s_+}$, it follows that
	\begin{equation*}
		\begin{split}
			\mathcal{F}(\shift_{\phi})
			&= \sum_{s_- = 0}^{m_--1} \smashoperator[r]{\sum_{s_+ \in \phi\inv(s_-)}} -q^{s_-}_{s_+}p^-_{s_-} \log q^{s_-}_{s_+}  \\
			&= \sum_{s_- = 0}^{m_--1} \smashoperator[r]{\sum_{s_+ \in \phi\inv(s_-)}} -p^+_{s_+} (\log p^+_{s_+} - \log p^-_{s_-}) \\
			&= \sum_{s_+ = 0}^{m_+-1} -p^+_{s_+} \log p^+_{s_+} - \sum_{s_- = 0}^{m_--1} -\left( \smashoperator[r]{\sum_{s_+ \in \phi\inv(s_-)}} p^+_{s_+} \right) \log p^-_{s_-} \\
			&= \sum_{s_+ = 0}^{m_+-1} -p^+_{s_+} \log p^+_{s_+} - \sum_{s_- = 0}^{m_-1} -p^-_{s_-} \log p^-_{s_-}.
		\end{split}
	\end{equation*}
	
 	Finally, since by \cref{prop:entropy_basic_cilinder_partitions} we have $\Enm(\mathcal C_0) = \sum_{s_+ = 0}^{m_+-1} -p^+_{s_+} \log p^+_{s_+}$ and $\Enm(\mathcal C_{-1}) = \sum_{s_- = 0}^{m_--1} -p^-_{s_-} \log p^-_{s_-}$, we conclude that
	\begin{equation*}
		\mathcal{F}(\shift_{\phi}) = \Enm(\mathcal C_0) - \Enm(\mathcal C_{-1}).
		\qedhere
	\end{equation*}
\end{proof}

In particular, since the metric entropy is given by $\enm(\shift_{\phi}) = \Enm(\mathcal C_0)$, then
    \begin{equation*}
	\enm(\shift_{\phi}) = \mathcal{F}(\shift_{\phi}) + \Enm(\mathcal C_{-1}).
    \end{equation*}

This can be interpreted as showing that the metric entropy is the sum of the entropy of the backwards alphabet with a component of non-invertibility, the folding entropy. In the trivial case $m_+ = m_-$, the folding entropy is zero, which shows that the metric entropy is equal to the entropy of the backward alphabet, which in this case equals the forward alphabet.

\notocsection{Acknowledgment}
The authors would like to thank Pouya Mehdipour for her comments.
N. M. was partially financed by the Coordenação de Aperfeiçoamento de Pessoal de Nível Superior
Brasil (CAPES) - grant 88887.645688/2021-00.
P. M. was partially financed by the Coordenação de Aperfeiçoamento de Pessoal de Nível Superior
Brasil (CAPES) - grant 141401/2020-6.
R.V. was partially supported by Conselho Nacional de Desenvolvimento Científico e Tecnológico (CNPq) (grants 313947/2020-1 and 314978/2023-2), and partially supported by Fundação de Amparo à Pesquisa do Estado de São Paulo (FAPESP) (grants 17/06463-3 and 18/13481-0). 

\printbibliography

\end{document}